\newtheorem{theorem}{Theorem}[section]
\newtheorem{lemma}[theorem]{Lemma}
\newtheorem{defi}[theorem]{Definition}
\newtheorem{rem}[theorem]{Remark}
\DeclareMathOperator{\im}{im}
\DeclareMathOperator{\ind}{ind}
\DeclareMathOperator{\sind}{s-ind}
\DeclareMathOperator{\sfl}{sf}
\DeclareMathOperator{\diag}{diag}
\DeclareMathOperator{\sgn}{sgn}
\DeclareMathOperator{\Vect}{Vect}
\title{A $K$-theoretical Invariant and Bifurcation for Homoclinics of Hamiltonian Systems}
\author{Alessandro Portaluri and Nils Waterstraat}
\begin{document}
\date{}
\maketitle

\footnotetext[1]{{\bf 2010 Mathematics Subject Classification: Primary 37J20; Secondary 37J45, 58E07, 58J30}}

\begin{abstract}
We revisit a $K$-theoretical invariant that was invented by the first author some years ago for studying multiparameter bifurcation of branches of critical points of functionals. Our main aim is to apply this invariant to investigate bifurcation of homoclinic solutions of families of Hamiltonian systems which are parametrised by tori. 
\end{abstract}

\section{Introduction}
Let $X$ be a compact topological space and $\mathcal{H}:X\times\mathbb{R}\times\mathbb{R}^{2n}\rightarrow\mathbb{R}$ a continuous map such that $\mathcal{H}_\lambda:=\mathcal{H}(\lambda,\cdot,\cdot):\mathbb{R}\times\mathbb{R}^{2n}\rightarrow\mathbb{R}$ is $C^2$ for all $\lambda\in X$ and its derivatives depend continuously on $\lambda\in X$. We consider the family of Hamiltonian systems

\begin{equation}\label{Hamiltoniannonlin}
\left\{
\begin{aligned}
Ju'(t)+\nabla_u \mathcal{H}_\lambda(t,u(t))&=0,\quad t\in\mathbb{R}\\
\lim_{t\rightarrow\pm\infty}u(t)&=0,
\end{aligned}
\right.
\end{equation}
where $\nabla_u$ denotes the gradient with respect to the variable $u\in\mathbb{R}^{2n}$, and  

\begin{align}\label{J}
J=\begin{pmatrix}
0&-I_n\\
I_n&0
\end{pmatrix}
\end{align}
is the standard symplectic matrix. If we assume that $\nabla_u \mathcal{H}_\lambda(t,0)=0$ for all $(\lambda,t)\in X\times\mathbb{R}$, then $u\equiv 0$ is a solution of \eqref{Hamiltoniannonlin} for all $\lambda\in X$. The aim of this article is to study bifurcation from this trivial branch of solutions, i.e., to find parameter values $\lambda^\ast\in X$ such that in each neighbourhood of $(\lambda^\ast,0)\in X\times C^1(\mathbb{R},\mathbb{R}^{2n})$ there is $(\lambda,u)$ where $u\neq 0$ is a solution of \eqref{Hamiltoniannonlin}.\\
In the one-parameter case, integral bifurcation invariants for \eqref{Hamiltoniannonlin} were obtained by Pejsachowicz in \cite{Jacobo} for $X=S^1$ and by the second author in \cite{Homoclinics} for $X=[0,1]$. Both constructions are based on a bifurcation theorem for critical points of one-parameter families of functionals from \cite{SFLPejsachowicz}, which we want to outline briefly. Let $I:=[a,b]$ denote a compact interval, let $E$ be a real separable Hilbert space and let us consider $C^2$-maps $f:I\times E\rightarrow\mathbb{R}$, where we assume that $0\in E$ is a critical point of each functional $f_\lambda:=f(\lambda,\cdot):E\rightarrow\mathbb{R}$. The second derivatives of $f_\lambda$ at $0\in E$ yield bounded selfadjoint operators $L_\lambda:E\rightarrow E$ which we assume to be Fredholm. Atiyah, Patodi and Singer introduced in \cite{AtiyahSinger} an integer valued homotopy invariant for paths of selfadjoint Fredholm operators which is called \textit{spectral flow}. The main theorem of \cite{SFLPejsachowicz} states that if $L_a, L_b$ are invertible and the spectral flow of the path $\{L_\lambda\}_{\lambda\in I}$ does not vanish, then there is a bifurcation of critical points from the trivial branch $I\times\{0\}\subset I\times E$, i.e. in every neighbourhood of $I\times\{0\}$ in $I\times E$ there is an element $(\lambda,u)\in I\times E$ such that $u\neq 0$ is a critical point of $f_\lambda$.\\
Apart from \cite{Jacobo} and \cite{Homoclinics}, the bifurcation theorem \cite{SFLPejsachowicz} has been used several times in recent research, e.g. for periodic Hamiltonian systems in \cite{SFLPejsachowiczII} and \cite{CalcVar}, for geodesics in semi-Riemannian manifolds in \cite{PicPorTau}, \cite{MussoPejsachowicz} and \cite{AleBifIch}, and for elliptic systems of partial differential equations in \cite{AleSmaleIndef} and \cite{systemsSzulkin}. In \cite{Ale} the first author attempted to obtain from \cite{SFLPejsachowicz} a $K$-theoretic invariant for multiparameter bifurcation of critical points of families of functionals 

\begin{align}\label{f}
f:X\times E\rightarrow\mathbb{R},
\end{align}
where $X$ is a compact manifold, by following a topological argument of Fitzpatrick and Pejsachowicz from \cite{FiPejsachowiczII} for bifurcation of branches of solutions for operator equations. Roughly speaking, denoting by $B\subset X$ the set of all bifurcation points of the family $f$, the invariant gives a sufficient condition for $B$ to be of codimension $1$ in $X$. Moreover, an application to multiparameter bifurcation for families of geodesics is discussed, which was done before for one-parameter families in \cite{PicPorTau} and \cite{MussoPejsachowicz}.\\
The aim of this article is twofold: Firstly, we weaken the assumptions and strengthen the result of the first author's bifurcation theorem for families of functionals $f:X\times E\rightarrow\mathbb{R}$ from \cite{Ale}. In the former theorem, $X$ is assumed to be a smooth compact and orientable manifold and $f$ is $C^2$. By using a recent theorem due to Pejsachowicz and the second author \cite{BifJac}, we can deal with the much more natural situation that $f$ in \eqref{f} is merely continuous but each $f_\lambda:=f(\lambda,\cdot):H\rightarrow\mathbb{R}$ is $C^2$. Moreover, we give a more precise description of the topology of $B$ following the second author's recent article \cite{NilsBif}. Beside these improvements, we also fix some minor inaccuracies in the statement and proof of the main theorem in \cite{Ale}, for which we provide a detailed and self-contained construction of our bifurcation invariant. Secondly, we investigate our $K$-theoretic bifurcation invariant for homoclinic solutions of families of Hamiltonian systems \eqref{Hamiltoniannonlin}. In \cite{Ale}, applications to families of geodesics in semi-Riemannian manifolds were invoked, however, as we explain below, the bifurcation invariant vanishes in this case due to topological reasons and so the existence of bifurcation cannot be concluded. In contrast, we show that non-trivial examples can be obtained for families of functionals having as critical points the solutions of systems as \eqref{Hamiltoniannonlin}. Our main result states that for every torus $X=T^k$, $k\geq 2$, there is a family of Hamiltonian systems \eqref{Hamiltoniannonlin} having a non-vanishing bifurcation invariant, and consequently the set $B\subset X$ of bifurcation points is at least of dimension $k-1$.\\
The paper consists of two main parts and an appendix. The first part is devoted to the improvement of the multiparameter bifurcation theorem for critical points of families of functionals \eqref{f} from \cite{Ale}. In particular, we give a detailed construction of the bifurcation invariant for which we recall the Atiyah-Jänich bundle for families of Fredholm operators, its variant for selfadjoint operators and the spectral flow in order to make the discussion self-contained. After having stated and proved the bifurcation theorem, we explain briefly why the bifurcation invariant vanishes for the class of examples that was discussed in \cite{Ale}. In the second part we turn to the family of Hamiltonian systems \eqref{Hamiltoniannonlin}. At first we recall that under suitable assumptions there exists a family of functionals as in \eqref{f} whose critical points are the solutions of \eqref{Hamiltoniannonlin}. Finally, we show that, in strong contrast to the functionals obtained from families of geodesics in semi-Riemannian manifolds in \cite{Ale}, here the bifurcation invariant can indeed be non-trivial. Actually, we show that for every torus $T^k$, $k\in\mathbb{N}$ there exist infinitely many Hamiltonian systems \eqref{Hamiltoniannonlin} for which the bifurcation invariant does not vanish and our abstract bifurcation theorem applies. The paper has three appendices in which we collect well-known facts on $K$-theory and characteristic classes, complexifications of real Hilbert spaces and the covering dimension of topological spaces for the convenience of the reader.\\
We assume throughout that the reader is familiar with the notation that we introduce in the appendices. Moreover, as we have to deal with real and complex Hilbert spaces, we generally denote real Hilbert spaces by $E$ and complex Hilbert spaces by $H$. The symbols $\mathcal{L}(E)$ and  $\mathcal{L}(H)$ stand for the usual Banach spaces of bounded linear operators on $E$ and $H$, respectively.


\section{A $K$-theoretical Bifurcation Invariant for Families of Functionals}

In this section we revise and improve the bifurcation theorem for families of functionals \cite{Ale} by our Theorem \ref{mainbif}. To this aim, we begin by introducing all necessary details in Section \ref{sections:preliminaries} for a rigorous proof in Section \ref{section:functionals}.


\subsection{Topological Preliminaries}\label{sections:preliminaries}

\subsubsection{Index Bundle and Selfadjoint Operators}
Let $H$ be a complex separable Hilbert space of infinite dimension and $X$ a compact topological space. We denote by $\Phi(H)\subset\mathcal{L}(H)$ the space of all Fredholm operators on $H$ with the norm topology and we consider continuous families $T:X\rightarrow\Phi(H)$. Our first aim is to recall the construction of the \textit{index bundle} of $T$, which is a $K$-theory class $\ind(T)\in K(X)$ that was introduced independently by Atiyah and Jänich in the Sixties (cf. e.g. \cite{KTheoryAtiyah}, \cite{Jaenich} and \cite{indbundleIch}). As $X$ is compact, it is readily seen that there is a finite dimensional subspace $V\subset H$ such that

\[\im(T_\lambda)+V=H,\quad\lambda\in X.\]
If $P$ is the orthogonal projection onto the orthogonal complement $V^\perp$ of $V$ in $H$, then we get a family of exact sequences

\[H\xrightarrow{T_\lambda} H\xrightarrow{P} V^\perp\rightarrow 0,\]
whose kernels define a vector bundle $E(T,V)$ over $X$ having as total space

\[\{(\lambda,u)\in X\times H:\, T_\lambda u\in V\}.\]
The $K$-theory class

\[\ind(T):=[E(T,V)]-[\Theta(V)]\in K(X)\]
does not depend on the choice of the finite dimensional space $V$ and is called the \textit{index bundle} of the family $T$. If $X$ is a singleton, then $\ind(T)$ is just the integral Fredholm index of the operator $T$. Correspondingly, the index bundle shares several properties with the Fredholm index of single elements in $\Phi(H)$, e.g.,

\begin{itemize}
	\item $\ind(T)=0\in K(X)$ if $T_\lambda$ is invertible for all $\lambda\in X$,
	\item if $h:[0,1]\times X\rightarrow\Phi(H)$ is a homotopy of Fredholm operators, then
	\[\ind(h_0)=\ind(h_1)\in K(X),\]
	\item if $T,M:X\rightarrow\Phi(H)$ are two families, then the \textit{logarithmic property} holds:
	\[\ind(TM)=\ind(T)+\ind(M)\in K(X).\]
\end{itemize}
Moreover, 

\begin{itemize}
	\item if $Y$ is a compact topological space and $f:Y\rightarrow X$ continuous, then
	
	\[\ind(f^\ast T)=f^\ast\ind(T)\in K(Y),\]
	where $f^\ast T:Y\rightarrow\Phi(H)$ is the family defined by $(f^\ast T)_\lambda=T_{f(\lambda)}$, $\lambda\in X$.
\end{itemize}
A striking result due to Atiyah and Jänich says that the index bundle induces a bijection

\begin{align}\label{indbij}
\ind:[X,\Phi(H)]\rightarrow K(X),
\end{align}
where $[X,\Phi(H)]$ denotes the set of all homotopy classes of maps from $X$ to $\Phi(H)$. In other words, $\Phi(H)$ is a classifying space for the $K$-theory functor.\\
Let us now denote by $\Phi_S(H)$ the subspace of $\Phi(H)$ consisting of selfadjoint operators, and let us recall that a selfadjoint operator is Fredholm if and only if its kernel is of finite dimension and its image is closed. Unfortunately, the index bundle $\ind(T)\in K(X)$ is not a useful concept for families $T:X\rightarrow\Phi_S(H)$. Indeed, if we consider the homotopy $h:[0,1]\times X\rightarrow\Phi(H)$ defined by $h_{(s,\lambda)}u=T_\lambda u+is\,u$, then, as selfadjoint operators have real spectra, the operators $h_{(s,\lambda)}$ are invertible for $s\in (0,1]$. Hence, by the first two properties of the index bundle from above, we get that $\ind(T)=0\in K(X)$. Atiyah and Singer introduced in \cite{AtiyahSinger} an index bundle in odd $K$-theory for families of selfadjoint Fredholm operators that is non-trivial in general. They proved that the space $\Phi_S(H)$ has three connected components

\begin{align*}
\Phi^+_S(H)&=\{T\in\Phi_S(H):\, \sigma_{ess}(T)\subset(0,\infty)\},\\
\Phi^-_S(H)&=\{T\in\Phi_S(H):\,\sigma_{ess}(T)\subset(-\infty,0)\},\\
\Phi^i_S(H)&=\Phi_S(H)\setminus(\Phi^+_S(H)\cup\Phi^-_S(H)),
\end{align*}
where

\begin{align}\label{sigmaess}
\sigma_{ess}(T)=\{\mu\in\mathbb{R}:\, \mu\,I_H-T\not\in\Phi(H)\}.
\end{align}
Their main result says that $\Phi^+_S(H)$ and $\Phi^-_S(H)$ are contractible topological spaces, whereas the map $\alpha:\Phi^i_S(H)\rightarrow \Omega(\Phi(H))$ to the loop space of $\Phi(H)$ defined by

\[\alpha(T)[t]=\begin{cases}
\cos(t)\,I_H+i\sin(t)\,T,&\quad 0\leq t\leq\pi\\
\cos(t)\,I_H+i\sin(t)\,I_H,&\quad \pi\leq t\leq2\pi
\end{cases},\]
is a homotopy equivalence. If we denote as before by square brackets homotopy classes of maps and use that the loop functor and the suspension functor are adjoint to each other, we obtain for every compact topological space $X$ a sequence of bijections

\[[X,\Phi_S(H)]\xrightarrow{\alpha}[X,\Omega(\Phi(H))]\rightarrow[\Sigma X,\Phi(H)]\xrightarrow{\ind} \widetilde{K}(\Sigma X)=K^{-1}(X)\]
which yields for every family $T:X\rightarrow\Phi_S(H)$ an odd $K$-theory class

\[\sind(T)\in K^{-1}(X).\]      
Note that the following facts are immediate consequences of the corresponding properties of the index bundle from above:

\begin{enumerate}
	\item $\sind(T)=0\in K^{-1}(X)$ if $T_\lambda$ is invertible for all $\lambda\in X$,
	\item if $h:[0,1]\times X\rightarrow\Phi_S(H)$ is a homotopy of selfadjoint Fredholm operators, then
	\[\sind(h_0)=\sind(h_1)\in K^{-1}(X),\]
	\item if $Y$ is a compact topological space and $f:Y\rightarrow X$ continuous, then
	
	\[\sind(f^\ast T)=f^\ast\sind(T)\in K^{-1}(Y),\]
	where $(f^\ast T)_\lambda=T_{f(\lambda)}$, $\lambda\in Y$.
\end{enumerate}
Finally, let us mention for later reference the following lemma which is an immediate consequence of the homotopy invariance property.

\begin{lemma}\label{indtriv}
If $T:X\rightarrow\Phi_S(H)$ and $K:X\rightarrow\mathcal{L}(H)$ is a family of compact selfadjoint operators, then 

\[\sind(T+K)=\sind(T)\in K^{-1}(X).\]
In particular, if $T_\lambda$ is invertible for all $\lambda\in X$, then $\ind(T+K)=0$.
\end{lemma}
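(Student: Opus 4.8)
The plan is to reduce the statement to the homotopy invariance property of the selfadjoint index bundle (property 2 in the list above). First I would observe that $T + K : X \to \Phi_S(H)$ is again a continuous family of \emph{selfadjoint} Fredholm operators: selfadjointness is clear since both $T_\lambda$ and $K_\lambda$ are selfadjoint, and the Fredholm property is preserved because a compact perturbation of a Fredholm operator is Fredholm (a selfadjoint operator is Fredholm iff it has finite-dimensional kernel and closed range, and this is stable under compact perturbations by Atkinson's theorem). Continuity of $\lambda \mapsto T_\lambda + K_\lambda$ in the norm topology is immediate from continuity of the two summands.

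Next I would build the linear homotopy $h : [0,1] \times X \to \Phi_S(H)$, $h_{(s,\lambda)} = T_\lambda + sK_\lambda$. For each fixed $s$, this is a compact selfadjoint perturbation of $T$, hence lands in $\Phi_S(H)$ by the argument just given; and $h$ is jointly continuous since $(s,\lambda) \mapsto sK_\lambda$ is continuous into the compact operators (using $\|sK_\lambda - s'K_{\lambda'}\| \le |s-s'|\,\|K_\lambda\| + |s'|\,\|K_\lambda - K_{\lambda'}\|$ and compactness of $X$ to bound $\|K_\lambda\|$). Then $h_0 = T$ and $h_1 = T+K$, so property 2 yields $\sind(T+K) = \sind(h_1) = \sind(h_0) = \sind(T)$, which is the first assertion.

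For the ``in particular'' clause, if each $T_\lambda$ is invertible then by property 1 we have $\sind(T) = 0 \in K^{-1}(X)$, hence $\sind(T+K) = 0$ as well. I suspect the paper actually means to write $\sind(T+K) = 0$ here rather than $\ind(T+K) = 0$ — indeed $\ind$ is only defined for families in $\Phi(H)$, and as noted in the excerpt it is identically zero on $\Phi_S(H)$ anyway, so either reading is correct but the natural conclusion is about the selfadjoint index.

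The only genuine point requiring care — and the step I would expect to be the main (minor) obstacle — is verifying that the homotopy $h$ actually stays inside $\Phi_S(H)$ rather than merely $\Phi(H)$, i.e.\ that it never drifts into the ``wrong'' connected component. But since $T + sK$ differs from $T_\lambda$ by a compact operator, it has the same essential spectrum, hence lies in the same component $\Phi^+_S$, $\Phi^-_S$, or $\Phi^i_S$ as $T_\lambda$ for every $s$; so the homotopy is through $\Phi_S(H)$ and property 2 genuinely applies. Everything else is a routine continuity estimate exploiting compactness of $X$.
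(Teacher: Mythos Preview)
Your proof is correct and is exactly what the paper intends: it states the lemma as ``an immediate consequence of the homotopy invariance property'' without further detail, and the linear homotopy $h_{(s,\lambda)}=T_\lambda+sK_\lambda$ you construct is the evident one. Your observation that the statement's ``$\ind(T+K)=0$'' should read ``$\sind(T+K)=0$'' is also on point.
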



\subsubsection{Spectral Flow and First Chern Class}\label{section-sfl}
For a selfadjoint Fredholm operator $T_0\in\Phi_S(H)$, there exists $\Lambda>0$ such that $\pm\Lambda$ do not belong to the spectrum 

\[\sigma(T_0)=\{\mu\in\mathbb{R}:\,\mu-T_0\,\text{not bijective}\}\] 
of $T_0$, and $\sigma(T_0)\cap[-\Lambda,\Lambda]$ consists only of isolated eigenvalues of finite multiplicity. We set for $-\Lambda\leq c<d\leq\Lambda$

\[E_{[c,d]}(T_0)=\bigoplus_{\mu\in[c,d]}\ker(\mu\,I_H-T_0),\]
and we note that it is readily seen from the continuity of finite sets of eigenvalues (cf. \cite[\S I.II.4]{GohbergClasses}) that there exists a neighbourhood $N(T_0,\Lambda)\subset\Phi_S(H)$ of $T_0$ such that $\pm\Lambda\notin\sigma(T)$ and $E_{[-\Lambda,\Lambda]}(T)$ has the same finite dimension for all $T\in N(T_0,\Lambda)$.\\
Let us now assume that $L:[a,b]\rightarrow\Phi_S(H)$ is a path of selfadjoint Fredholm operators. There is a subdivision $a=t_0<t_1<\ldots<t_N=b$, operators $L_i\in\Phi_S(H)$ and numbers $\Lambda_i>0$, $i=1,\ldots N$, such that the trace of the restriction of the path $L$ to $[t_{i-1},t_i]$ is inside $N(L_i,\Lambda_i)$. The \textit{spectral flow} of the path $L$ is defined by

\begin{align}\label{sfl}
\sfl(L,[a,b])=\sum^N_{i=1}{\left(\dim E_{[0,\Lambda_i]}(L_{t_i})-\dim E_{[0,\Lambda_i]}(L_{t_{i-1}})\right)}\in\mathbb{Z}.
\end{align}  
Roughly speaking, $\sfl(L,[a,b])$ is the number of negative eigenvalues of $L_a$ that
become positive when the parameter $t$ travels from $a$ to $b$ minus the number of positive eigenvalues of $L_a$ that become negative, i.e., the net number of eigenvalues which cross zero.\\
Let us mention the following properties of the spectral flow:

\begin{enumerate}
\item[(i)] If $L:[a,b]\rightarrow\Phi_S(H)$ is a path and $L_c$ invertible for some $c\in(a,b)$, then

	\begin{align*}
	\sfl(L,[a,b])=\sfl(L,[a,c])+\sfl(L,[c,b]).
	\end{align*}
\item[(ii)] If $L'$ is the path $L:[a,b]\rightarrow\Phi_S(H)$ traversed in opposite direction, then

\begin{align*}
\sfl(L',[a,b])=-\sfl(L,[a,b]).
\end{align*}

\item[(iii)] If $L:[a,b]\rightarrow\Phi_S(H)$ is such that $L_t$ is invertible for all $t\in [a,b]$, then $\sfl(L,[a,b])=0$.

\item[(iv)] Let $h:[0,1]\times [a,b]\rightarrow\Phi_S(H)$ be a continuous map such that $h(s,a)$ and $h(s,b)$ are invertible for all $s\in[0,1]$. Then

\[\sfl(h(0,\cdot),[a,b])=\sfl(h(1,\cdot),[a,b]).\]

\item[(v)] If $L_t\in\Phi^+_S(H)$ for all $t\in[a,b]$, then

\[\mu_{Morse}(L_t):=\dim\left(\bigoplus_{\mu<0}{\ker(\mu\,I_H-L_t)}\right)<\infty,\quad t\in[a,b],\]
then 
\[\sfl(L,[a,b])=\mu_{Morse}(L_a)-\mu_{Morse}(L_b).\]
\end{enumerate}
The spectral flow of a closed path can be computed topologically by using the index bundle and the first Chern number $c_1:K^{-1}(S^1)\rightarrow H^1(S^1;\mathbb{Z})$ as introduced in Appendix \ref{appendix-bundles} (cf. \cite[Lemma 1.13]{BoossDesuspension} and also \cite{BoWoBuch}). Let $L:[a,b]\rightarrow\Phi_S(H)$ be a closed path, i.e. $L_a=L_b$. Then $L$ induces a family of selfadjoint Fredholm operators parametrised by $S^1$, which we denote by $\hat{L}:S^1\rightarrow\Phi_S(H)$. 

\begin{lemma}\label{Booss}
If $L:[a,b]\rightarrow\Phi_S(H)$ is a closed path, then

\[\sfl(L,[a,b])=\langle c_1(\sind \hat{L}),[S^1]\rangle, \]
where $[S^1]$ denotes the standard generator of $H_1(S^1;\mathbb{Z})$.
\end{lemma}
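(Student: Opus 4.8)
The plan is to verify the identity one connected component of $\Phi_S(H)$ at a time and, on the only interesting component, to reduce it to a single generating loop by observing that both sides are homotopy-invariant homomorphisms $\pi_1\to\mathbb{Z}$. Since the image of a closed path $L$ is connected, it lies in one of $\Phi^+_S(H)$, $\Phi^-_S(H)$, $\Phi^i_S(H)$. If $L$ takes values in $\Phi^\pm_S(H)$, then $\hat{L}$ is nullhomotopic because these spaces are contractible, so $\sind\hat{L}=0$ and the right-hand side vanishes; the left-hand side vanishes too, by property (v) of the spectral flow (and its mirror for $\Phi^-_S(H)$, obtained by composing with $T\mapsto-T$, which reverses the sign of the spectral flow and interchanges $\Phi^+_S(H)$ and $\Phi^-_S(H)$), since $L_a=L_b$. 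Hence I may assume $L$ is a loop in $\Phi^i_S(H)$.

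Next I would show that each side defines a homomorphism from the set of free homotopy classes of loops in $\Phi^i_S(H)$ to $\mathbb{Z}$. On the right this is immediate: $\sind$ is homotopy-invariant by property~2, $c_1$ is natural and $\langle\,\cdot\,,[S^1]\rangle$ linear, and additivity of $\sind$ under concatenation of loops follows from the fact that the bijections in the sequence defining $\sind$ respect the relevant group structures. On the left, properties (i), (ii) and (iv) show that $\sfl(\,\cdot\,,[a,b])$ induces a well-defined homomorphism $\pi_1(\Phi^i_S(H),T_0)\to\mathbb{Z}$ for any \emph{invertible} basepoint $T_0$, that this homomorphism is independent of such a basepoint and conjugation-invariant, and that any loop is freely homotopic --- by sliding its basepoint along a path into the invertible operators in $\Phi^i_S(H)$ --- to a loop based at an invertible operator, with unchanged spectral flow. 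Since $\Phi^i_S(H)\simeq\Omega\Phi(H)$ is, up to homotopy, a loop space, $\pi_1(\Phi^i_S(H))$ is abelian and free homotopy classes of loops coincide with $\pi_1(\Phi^i_S(H))$. I expect this bookkeeping --- turning the spectral flow of an arbitrary loop into a genuine homotopy invariant and homomorphism --- to be the most delicate, if routine, point.

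It remains to identify the two homomorphisms. From $\Phi^i_S(H)\simeq\Omega\Phi(H)$ we get $\pi_1(\Phi^i_S(H))\cong\pi_2(\Phi(H))\cong\widetilde{K}(S^2)=K^{-1}(S^1)\cong\mathbb{Z}$, and by construction $\sind$ realises this isomorphism; likewise $K^{-1}(S^1)\xrightarrow{c_1}H^1(S^1;\mathbb{Z})\xrightarrow{\langle\,\cdot\,,[S^1]\rangle}\mathbb{Z}$ is an isomorphism by the computation of $c_1$ on the circle recalled in Appendix~\ref{appendix-bundles}. Two homomorphisms $\mathbb{Z}\to\mathbb{Z}$ that agree on a generator coincide, so it is enough to evaluate both sides on one loop $L$ generating $\pi_1(\Phi^i_S(H))$ --- for instance a loop in $\Phi^i_S(H)$ along which a single eigenvalue crosses $0$ exactly once, which is a generator because $\langle c_1(\sind\hat{L}),[S^1]\rangle=\pm1$ for it. For such a loop the left-hand side equals $\pm1$ directly from the defining formula \eqref{sfl}, and the two signs are made to agree by fixing the orientation of $[S^1]$ and the normalisation of $c_1$ consistently. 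This final step is exactly the content of \cite[Lemma~1.13]{BoossDesuspension}, so the lemma may equally be quoted from there; the first two paragraphs explain how its local statement propagates to an arbitrary closed path.
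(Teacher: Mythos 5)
The paper does not prove Lemma \ref{Booss} at all; it simply cites \cite[Lemma 1.13]{BoossDesuspension} (and \cite{BoWoBuch}), so there is no internal argument of the paper's to compare your proposal against. Your plan is a sensible and well-organised reduction: both sides vanish on the contractible components $\Phi^\pm_S(H)$, both sides are homomorphisms on $\pi_1(\Phi^i_S(H))\cong\pi_2(\Phi(H))\cong\widetilde{K}(S^2)\cong\mathbb{Z}$, and one should identify them on a generator. The observation that $\Phi^i_S(H)\simeq\Omega\Phi(H)$ has abelian $\pi_1$, so that free and based homotopy classes of loops coincide, is correct and essential for the bookkeeping.

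Two gaps remain. The first is minor: to argue that sliding the basepoint along a path into the invertible operators leaves the spectral flow unchanged, you split the slid loop $\gamma^{-1}*L*\gamma$ at the possibly \emph{non-invertible} point $L_a$, whereas property (i) in the paper asserts concatenation additivity only at invertible $c$; the unrestricted version is true and standard, but it is not something the paper supplies, and you should flag it. The second gap is substantive: the evaluation on a generator is circular as written. You take a loop $L$ along which a single simple eigenvalue crosses zero, read off $\sfl(L)=\pm1$ from \eqref{sfl}, and then assert that $\hat L$ is a generator \emph{because} $\langle c_1(\sind\hat L),[S^1]\rangle=\pm1$ --- but computing that pairing for $\hat L$ is precisely the index-bundle calculation the lemma is about. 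Without extra input you can only conclude from the homomorphism structure that the two sides agree up to a common sign; determining that sign, relative to the fixed generator $[S^1]$ and the normalisation of $c_1$, is the analytic content of the statement, and your closing sentence defers it to \cite{BoossDesuspension} exactly as the paper does. Your first two paragraphs are thus best understood not as a proof of the lemma but as a clean demonstration that it suffices to verify it on one explicit generating loop.
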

\noindent
Finally, let us mention that the spectral flow can be defined verbatim for paths $L:[a,b]\rightarrow\Phi_S(E)$ of selfadjoint Fredholm operators on a real Hilbert space $E$ by formula \eqref{sfl}. From \eqref{complexeigenspace}, we immediately obtain that

\begin{align}\label{realcomplex}
\sfl(L,[a,b])=\sfl(L^\mathbb{C},[a,b]),
\end{align}
which means that the spectral flow is invariant under complexification.


\subsection{The Bifurcation Theorem}\label{section:functionals}
In this section we let $E$ be a real separable Hilbert space, $X$ a compact topological space and $f:X\times E\rightarrow\mathbb{R}$ a continuous map such that each $f_\lambda:=f(\lambda,\cdot):E\rightarrow\mathbb{R}$ is $C^2$. We obtain two maps

\begin{align}\label{contder}
\begin{split}
X\times E\ni(\lambda,u)&\mapsto d_uf_\lambda\in \mathcal{L}(E,\mathbb{R}),\\
X\times E\ni(\lambda,u)&\mapsto d^2_uf_\lambda\in \mathcal{L}^2(E,\mathbb{R})
\end{split}
\end{align}
which we assume throughout to be continuous. Here $\mathcal{L}(E,\mathbb{R})$ denotes the Banach space of all bounded linear functionals on $E$, and $\mathcal{L}^2(E,\mathbb{R})$ is the Banach space of all bounded bilinear maps.\\
We say that $u\in E$ is a critical point of $f_\lambda$ if $d_uf_\lambda=0$ and in what follows we assume that $d_0f_\lambda=0$ for all $\lambda\in X$.

\begin{defi}\label{def-bif}
An element $\lambda^\ast\in X$ is called a bifurcation point of critical points of $f$, if every neighbourhood of $(\lambda^\ast,0)$ in $X\times E$ contains elements $(\lambda,u)\in X\times E$ such that $u\neq 0$ is a critical point of $f_\lambda$.
\end{defi}
\noindent
In what follows, we denote by $B\subset X$ the set of all bifurcation points of $f$. Note that $B$ is closed, which is an immediate consequence of Definition \ref{def-bif}.\\
The second derivatives of $f_\lambda$, $\lambda\in X$, at the critical point $0$ yield a continuous family of bilinear forms $\{d^2_0f_\lambda\}_{\lambda\in X}$ in $\mathcal{L}^2(E,\mathbb{R})$, which are symmetric as second derivatives of functionals. Consequently, by the Riesz representation theorem, we obtain a continuous family of bounded symmetric operators $L=\{L_\lambda\}_{\lambda\in X}$ by setting

\[(d^2_0f_\lambda)(u,v)=\langle L_\lambda u,v\rangle_E,\quad u,v\in E.\]
Our final standing assumption is that each $L_\lambda$ is Fredholm, and so we have a family $L:X\rightarrow\Phi_S(E)$. Denoting as in Appendix \ref{appendix:complexification} by $L^\mathbb{C}_\lambda$, $\lambda\in X$, the complexified operators, we obtain a family $L^\mathbb{C}:X\rightarrow\Phi_S(E^\mathbb{C})$ of selfadjoint Fredholm operators on the complex Hilbert space $E^\mathbb{C}$.

\begin{theorem}\label{mainbif}
Let $X$ be a compact, connected and orientable topological manifold of dimension $k\geq 2$ and assume that

\begin{itemize}
	\item[(i)] there is $\lambda_0\in X$ such that $L_{\lambda_0}$ is invertible,
	\item[(ii)] $c_1(\sind L^\mathbb{C})\neq 0\in H^1(X;\mathbb{Z})$.
\end{itemize} 
Then the covering dimension of the set $B$ of all bifurcation points in $X$ is at least $k-1$, and $B$ is not contractible to a point. 
\end{theorem}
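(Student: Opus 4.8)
The plan is to argue by contradiction: assume that $B$ has covering dimension at most $k-2$, or that $B$ is contractible to a point, and derive a contradiction with hypothesis (ii) via the bifurcation theorem for one-parameter families from \cite{SFLPejsachowicz} (more precisely its improved version \cite{BifJac}) together with the relation between spectral flow and the first Chern class from Lemma \ref{Booss}. The first step is to establish that on the complement $X\setminus B$ the family $L$ is ``locally trivial'' in the relevant sense: at every $\lambda\notin B$ there is no bifurcation, so by \cite{BifJac} the spectral flow of $L$ along any small loop around $\lambda$ must vanish, and in fact along any loop that can be contracted within $X\setminus B$ the spectral flow vanishes as well, by property (iv) of the spectral flow and homotopy invariance of $\sind$. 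Combined with Lemma \ref{Booss} this says that the class $c_1(\sind L^\mathbb{C})\in H^1(X;\mathbb{Z})$ pulls back to zero under any continuous map $S^1\to X$ factoring through $X\setminus B$; equivalently, $c_1(\sind L^\mathbb{C})$ lies in the kernel of the restriction $H^1(X;\mathbb{Z})\to H^1(X\setminus B;\mathbb{Z})$, so it comes from the relative cohomology $H^1(X,X\setminus B;\mathbb{Z})$.

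The second step is to invoke a dimension-theoretic vanishing result: if $\dim B\leq k-2$, then by Alexander duality (valid since $X$ is a compact orientable $k$-manifold) or by the covering-dimension obstruction theory collected in Appendix C, the group $H^1(X,X\setminus B;\mathbb{Z})\cong H^{k-1}_{c}(X\setminus B;\mathbb{Z})$ — or, via Čech–Alexander duality, $\check H_{k-1}(B;\mathbb{Z})$ — vanishes because $B$ has dimension too small to carry a nonzero $(k-1)$-dimensional Čech homology class. Hence the restriction $H^1(X;\mathbb{Z})\to H^1(X\setminus B;\mathbb{Z})$ is injective, forcing $c_1(\sind L^\mathbb{C})=0$ once we know it dies in $H^1(X\setminus B;\mathbb{Z})$, contradicting (ii). For the non-contractibility clause: if $B$ were contractible (in particular connected), then $X\setminus B$ would be connected with $H^1(X;\mathbb{Z})\to H^1(X\setminus B;\mathbb{Z})$ still injective by the same duality argument — a contractible closed subset cannot support a nonzero class in $\check H_{k-1}$ — again contradicting (ii). Hypothesis (i), that $L_{\lambda_0}$ is invertible at some point, is what guarantees $B\neq X$ and lets the relative-cohomology machinery get started; it also ensures, via Lemma \ref{indtriv} and homotopy invariance, that $\sind L^\mathbb{C}$ and hence its Chern class are well-defined obstructions rather than automatically zero.

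The main obstacle I anticipate is making the link between ``no bifurcation along a loop $\gamma\subset X\setminus B$'' and ``spectral flow of $L\circ\gamma$ vanishes'' completely rigorous, because \cite{SFLPejsachowicz} and \cite{BifJac} are stated for paths with invertible endpoints rather than for closed loops, and along an arbitrary loop in $X\setminus B$ the operator $L_{\gamma(t)}$ need not be invertible at any point. One must perturb $\gamma$ or use the fact that $0\notin B$ implies a local absence of bifurcation that can be propagated; the clean way is to observe that if $\lambda^\ast\notin B$ then there is a neighbourhood on which the only critical point near $0$ is $0$ itself, run the one-parameter bifurcation theorem on a path that crosses a small disc around $\lambda^\ast$, conclude the spectral flow across that disc is zero, and then patch these local vanishings along the loop using property (i) of the spectral flow (concatenation) after a homotopy that makes the relevant sub-endpoints invertible — such a homotopy exists because $X\setminus B$ is locally path-connected and the non-invertible locus of $L$ inside $X\setminus B$ has empty interior, again by absence of bifurcation. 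The second delicate point is citing the precise duality statement in the form needed; here I would lean on the covering-dimension results in Appendix C together with standard Čech–Alexander duality for compact subsets of orientable manifolds, and I expect the argument to be short once the right statement is quoted. Finally, one should note that the conclusion ``$B$ is not contractible'' is genuinely stronger than ``$\dim B\geq k-1$'' only in the sense of ruling out, e.g., a $(k-1)$-cell; both follow from the single observation that $c_1(\sind L^\mathbb{C})$ survives the restriction to $X\setminus B$.
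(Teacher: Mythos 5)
Your overall strategy is close in spirit to the paper's: use the non-vanishing of $c_1(\sind L^\mathbb{C})$, translate it via Lemma~\ref{Booss} and the bifurcation theorem of \cite{BifJac} into information about loops avoiding $B$, and then apply duality to constrain the size of $B$. You also correctly anticipate the crux of the matter, namely that the cited bifurcation theorem requires invertible endpoints and a loop in $X\setminus B$ need not pass through any invertible parameter. However, there are genuine gaps in how you propose to close that issue and in the case analysis.

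\textbf{The proposed patching argument is false.} You claim ``the non-invertible locus of $L$ inside $X\setminus B$ has empty interior, again by absence of bifurcation.'' This is not true: absence of bifurcation does not force $L_\lambda$ to be invertible, or even generically invertible. A family of degenerate-but-non-bifurcating critical points (e.g.\ $f_\lambda(u)=\|\pi u\|^4+\|(I-\pi)u\|^2$ for a fixed rank-one orthogonal projection $\pi$ and all $\lambda$) has $L_\lambda$ Fredholm with one-dimensional kernel for \emph{every} $\lambda$, with no bifurcation anywhere. So you cannot perturb sub-endpoints to be invertible just because you stay in $X\setminus B$. The paper avoids this entirely: when $X\setminus B$ is connected, the Hurewicz map $\pi_1(X\setminus B,\lambda_0)\to H_1(X\setminus B;\mathbb{Z})$ is surjective, so every homology class is represented by a loop \emph{based at} $\lambda_0$, where $L_{\lambda_0}$ is invertible by hypothesis~(i). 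Then \cite{BifJac} applies directly to the corresponding one-parameter family, and there is no need for any local patching. Your first-step remarks (spectral flow vanishes along small loops, vanishes along nullhomotopic loops) are all trivially true and do not establish the needed statement that $c_1(\sind L^\mathbb{C})$ pairs to zero with all of $H_1(X\setminus B;\mathbb{Z})$; only the basepoint trick does.

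\textbf{The disconnected case is not handled.} If $X\setminus B$ is disconnected, some components may contain no parameter at which $L$ is invertible, and the above argument says nothing about loops living there. In the paper, this case is disposed of first by a purely topological argument that does not use hypothesis~(ii) at all: disconnectedness of $X\setminus B$ gives $\widetilde{H}_0(X\setminus B;\mathbb{Z})\neq 0$, the long exact sequence of the pair yields a surjection $H_1(X,X\setminus B;\mathbb{Z})\to\widetilde{H}_0(X\setminus B;\mathbb{Z})$, and Poincar\'e--Lefschetz duality (\cite[Cor.~VI.8.4]{Bredon}) gives $\check{H}^{k-1}(B;\mathbb{Z})\cong H_1(X,X\setminus B;\mathbb{Z})\neq 0$. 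Your proof would need a separate branch of this kind.

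\textbf{The duality you invoke is the less convenient one.} You work with $H^1(X,X\setminus B;\mathbb{Z})$ and want it to vanish when $\dim B\leq k-2$, citing an isomorphism to $\check{H}_{k-1}(B;\mathbb{Z})$. \v{C}ech homology is not as well-behaved as \v{C}ech cohomology, and the dimension-theoretic vanishing in Lemma~\ref{cech} (via \cite{Hurewicz}) is stated for \v{C}ech cohomology. The paper instead dualizes a \emph{homology} class $\xi\in H_1(X;\mathbb{Z})$ pairing nontrivially with $c_1(\sind L^\mathbb{C})$, pushes it into $H_1(X,X\setminus B;\mathbb{Z})$ where it is shown to be nonzero by the spectral-flow contradiction, and then applies $H_1(X,X\setminus B;\mathbb{Z})\cong\check{H}^{k-1}(B;\mathbb{Z})$. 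This lands exactly on the object controlled by Lemma~\ref{cech} and gives non-contractibility for free. In short: your general strategy is viable, but you need to (a) replace the empty-interior claim with the basepoint argument via the surjective Hurewicz map, (b) add a separate topological argument for $X\setminus B$ disconnected, and (c) switch to the homological formulation of Poincar\'e--Lefschetz duality to connect with the covering-dimension lemma cleanly.
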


\begin{proof}
We divide the proof into two steps as the argument is different depending on whether or not $X\setminus B$ is connected.\\
\vspace{0.1cm}\\
\textbf{Step 1:} We assume that $X\setminus B$ is not connected. Accordingly, the reduced homology group $\widetilde{H}_0(X\setminus B;\mathbb{Z})$ is non-trivial. As $X$ is connected, the long exact sequence of homology gives

\begin{align*}
\ldots\rightarrow H_1(X,X\setminus B;\mathbb{Z})\rightarrow\widetilde{H}_0(X\setminus B;\mathbb{Z})\rightarrow\widetilde{H}_0(X;\mathbb{Z})=0
\end{align*}
and hence yields a surjective map $H_1(X,X\setminus B;\mathbb{Z})\rightarrow\widetilde{H}_0(X\setminus B;\mathbb{Z})$ implying the non-triviality of $H_1(X,X\setminus B;\mathbb{Z})$. Finally, using that $B$ is compact, we obtain by Poincar\'e-Lefschetz duality (cf. \cite[Cor. VI.8.4]{Bredon}) an isomorphism

\begin{align*}
H_1(X,X\setminus B;\mathbb{Z})\xrightarrow{\cong} \check{H}^{k-1}(B;\mathbb{Z})
\end{align*}
showing that $\check{H}^{k-1}(B;\mathbb{Z})\neq 0$, where $\check{H}$ denotes \v{C}ech-cohomology as in Appendix \ref{appendix-c}. Hence $B$ is not contractible to a point and, moreover, we obtain $\dim B\geq k-1$ by Lemma \ref{cech}.\\
Let us note that in this part of the proof, we do not even need the orientability of $X$ as the same argument could also be done with $\mathbb{Z}_2$ coefficients (cf. \cite{NilsBif}).
\vspace{0.2cm}\\
\textbf{Step 2:} We now assume that $X\setminus B$ is connected. By the universal coefficient theorem, the duality pairing 

\[\langle\cdot,\cdot\rangle:H^1(X;\mathbb{Z})\times H_1(X;\mathbb{Z})\rightarrow\mathbb{Z}\]
is non-degenerate. Consequently, as $c_1(\sind L^\mathbb{C})\neq 0\in H^1(X;\mathbb{Z})$, there exists $\xi\in H_1(X;\mathbb{Z})$ such that

\begin{align}\label{nontriv}
\langle c_1(\sind L^\mathbb{C}),\xi\rangle\neq 0\in\mathbb{Z}.
\end{align}  
Let $\eta\in\check{H}^{k-1}(X;\mathbb{Z})$ denote the Poincar\'e dual of $\xi$, where we use that $X$ is compact and orientable. According to \cite[Cor. VI.8.4]{Bredon}, we have a commutative diagram

\begin{align}\label{exact}
\begin{split}
\xymatrix{&\check{H}^{k-1}(X;\mathbb{Z})\ar[r]^(.47){i^\ast}&\check{H}^{k-1}(B;\mathbb{Z})\\
H_{1}(X\setminus B;\mathbb{Z})\ar[r]^(.57){j_\ast}&H_{1}(X;\mathbb{Z})\ar[u]\ar[r]^(.37){\pi_\ast}&H_{1}(X,X\setminus B;\mathbb{Z})\ar[u]
}
\end{split}
\end{align}
where the vertical arrows are isomorphisms given by Poincar\'e-Lefschetz duality and the lower horizontal sequence is part of the long exact homology sequence of the pair $(X,X\setminus B)$. Because of the commutativity, the class $i^\ast\eta$ is dual to $\pi_\ast\xi$ and we now assume by contradiction the triviality of the latter one.\\
By the exactness of the lower horizontal sequence, there is some $\beta\in H_1(X\setminus B;\mathbb{Z})$ such that $\xi=j_{\ast}\beta$. We now consider the fundamental group $\pi_1(X\setminus B,\lambda_0)$, where $\lambda_0$ denotes the parameter value from the assertion of the theorem for which $L_{\lambda_0}$ is invertible. As $X\setminus B$ is connected, the Hurewicz homomorphism

\[\Gamma:\pi_1(X\setminus B,\lambda_0)\ni[\gamma]\mapsto \gamma_\ast[S^1]\in H_1(X\setminus B;\mathbb{Z}) \]
is surjective, where $[S^1]$ denotes the standard generator of $H_1(S^1;\mathbb{Z})$ (cf. \cite[IV,\S 3]{Bredon}). We choose $[\gamma]\in\pi_1(X\setminus B,\lambda_0)$ such that $\Gamma([\gamma])=\beta$ and set $\overline{\gamma}:=j\circ\gamma\circ q:(I,\partial I)\rightarrow (X,\lambda_0)$, where $q$ is the quotient map $q:(I,\partial I)\rightarrow (S^1,1)$ obtained by collapsing $\partial I$ to a point and $I$ denotes the unit interval. Note that the path $\overline{\gamma}$ does not intersect the bifurcation set $B$.\\
Let us now consider the one-parameter family of functionals

\[\overline{f}:I\times E\rightarrow\mathbb{R},\quad \overline{f}(t, u)=f(\overline{\gamma}(t),u).\] 
Clearly, $\overline{f}$ is continuous, each $\overline{f}_t:=\overline{f}(t,\cdot):H\rightarrow\mathbb{R}$ is $C^2$ and

\[d_u\overline{f}_t=d_uf_{\overline{\gamma}(t)},\quad d^2_u\overline{f}_t=d^2_uf_{\overline{\gamma}(t)},\quad t\in I.\]
Consequently, the critical points of $\overline{f}$ are precisely the critical points of $f$ that lie on the trace of $\overline{\gamma}$. Also every bifurcation point of $\overline{f}$ is a bifurcation point of $f$ (but not vice versa), and the Riesz representation of $d^2_0\overline{f}_t$, which we denote by $\overline{L}_t$, is just $L_{\overline{\gamma}(t)}$.\\
By the main theorem of \cite{BifJac} there is a bifurcation point of critical points of $\overline{f}$ if $\overline{L}_0, \overline{L}_1$ are invertible and $\sfl(\overline{L},I)\neq 0$. Note that $\overline{L}_0, \overline{L}_1$ are indeed invertible, but as $\overline{\gamma}$ does not intersect the bifurcation set $B$, the spectral flow of $\overline{L}$ needs to vanish. Using Lemma \ref{Booss} and \eqref{nontriv}, we get

\begin{align*}
0&=\sfl(\overline{L},I)=\sfl(\overline{L}^\mathbb{C},I)=\sfl(L^\mathbb{C}\circ\overline{\gamma},I)=\sfl(L^\mathbb{C}\circ j\circ\gamma\circ q,I)=\langle c_1(\sind(L^\mathbb{C}\circ j\circ\gamma)),[S^1]\rangle\\
&=\langle c_1(\sind(\gamma^\ast j^\ast L^\mathbb{C})),[S^1]\rangle=\langle \gamma^\ast j^\ast c_1(\sind( L^\mathbb{C})),[S^1]\rangle=\langle c_1(\sind L^\mathbb{C}),j_\ast\gamma_\ast[S_1]\rangle\\
&=\langle c_1(\sind L^\mathbb{C}),j_\ast\beta\rangle=\langle c_1(\sind L^\mathbb{C}),\xi\rangle\neq 0
\end{align*}
which is a contradiction.\\
Hence, $\pi^\ast\xi\neq 0\in H_1(X,X\setminus B;\mathbb{Z})$ and so $\iota^\ast\eta\neq 0\in \check{H}^{k-1}(B;\mathbb{Z})$ showing the non-triviality of the latter group. Consequently, as $k\geq 2$, $B$ is non-contractible and its dimension is greater or equal to $k-1$ by Lemma \ref{cech}.
\end{proof}

\begin{rem}
Note that Theorem \ref{mainbif} improves the main theorem of \cite{Ale} in two ways: Firstly, as already mentioned in the introduction, we no longer assume that $X$ is a smooth manifold and $f:X\times E\rightarrow\mathbb{R}$ is $C^2$. Instead, we just require a continuous dependence on the parameter and a topological manifold as parameter space. Secondly, it is shown in \cite{Ale} that under the assumptions (i) and (ii) the bifurcation set $B$ either disconnects $X$ or it is not contractible to a point. Here we point out that $B$ is never contractible to a point following an idea from \cite{NilsBif}.
\end{rem}
\noindent
In \cite{AleBifIch} the authors proved a multiparameter bifurcation theorem by considering the spectral flow and paths in the parameter space. The following lemma shows that also Theorem \ref{mainbif} can be reformulated as an assertion about the spectral flow of paths, which is important for the proof of Theorem \ref{example} below.

\begin{lemma}\label{lemma-invsfl}
Let $X$ be a compact path-connected space and $L:X\rightarrow\Phi_S(E)$ a family of selfadjoint Fredholm operators. Then the assumptions (i) and (ii) in Theorem \ref{mainbif} hold if and only if there exists a path $\gamma:I\rightarrow X$ such that $\gamma(0)=\gamma(1)$, $L_{\gamma(0)}$ is invertible and $\sfl(L\circ\gamma,I)\neq 0$. 
\end{lemma}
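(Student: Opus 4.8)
The plan is to prove the two implications separately. For the ``only if'' direction, assume (i) and (ii) hold. The idea is essentially to extract a loop in $X$ that detects the non-vanishing first Chern class, exactly as in Step 2 of the proof of Theorem \ref{mainbif}. Concretely, since $c_1(\sind L^\mathbb{C})\neq 0\in H^1(X;\mathbb{Z})$, the non-degeneracy of the pairing $H^1(X;\mathbb{Z})\times H_1(X;\mathbb{Z})\to\mathbb{Z}$ (via the universal coefficient theorem, using that $H_0$ is torsion-free) yields a class $\xi\in H_1(X;\mathbb{Z})$ with $\langle c_1(\sind L^\mathbb{C}),\xi\rangle\neq 0$. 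As $X$ is path-connected, the Hurewicz homomorphism $\pi_1(X,\lambda_0)\to H_1(X;\mathbb{Z})$ is surjective, so we may choose a loop $\gamma:(I,\partial I)\to(X,\lambda_0)$ (factoring through $S^1$ via the collapsing map $q$) with $\gamma_\ast[S^1]=\xi$. Here we use assumption (i) to base the loop at $\lambda_0$, so that $L_{\gamma(0)}$ is invertible. Then the same chain of equalities as in the proof of Theorem \ref{mainbif}, using \eqref{realcomplex}, Lemma \ref{Booss}, the naturality of the index bundle under pullback, and the naturality of the first Chern class, gives
\[
\sfl(L\circ\gamma,I)=\sfl(L^\mathbb{C}\circ\gamma,I)=\langle c_1(\sind(\gamma^\ast L^\mathbb{C})),[S^1]\rangle=\langle \gamma^\ast c_1(\sind L^\mathbb{C}),[S^1]\rangle=\langle c_1(\sind L^\mathbb{C}),\xi\rangle\neq 0,
\]
which is the desired conclusion.

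For the ``if'' direction, suppose there is a loop $\gamma:I\to X$ with $\gamma(0)=\gamma(1)=:\lambda_0$, with $L_{\lambda_0}$ invertible and $\sfl(L\circ\gamma,I)\neq 0$. Assumption (i) is then immediate, since $\lambda_0$ is a parameter value at which $L$ is invertible. For assumption (ii), run the computation of the previous paragraph in reverse: the loop $\gamma$ induces $\hat\gamma:S^1\to X$, and
\[
0\neq\sfl(L\circ\gamma,I)=\sfl(L^\mathbb{C}\circ\gamma,I)=\langle c_1(\sind(\hat\gamma^\ast L^\mathbb{C})),[S^1]\rangle=\langle \hat\gamma^\ast c_1(\sind L^\mathbb{C}),[S^1]\rangle=\langle c_1(\sind L^\mathbb{C}),\hat\gamma_\ast[S^1]\rangle.
\]
Since the pairing of $c_1(\sind L^\mathbb{C})$ with the homology class $\hat\gamma_\ast[S^1]\in H_1(X;\mathbb{Z})$ is non-zero, the cohomology class $c_1(\sind L^\mathbb{C})$ cannot vanish, so (ii) holds.

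The routine ingredients are the naturality properties already recorded in the excerpt (homotopy invariance and naturality of $\sind$ under continuous maps, Lemma \ref{Booss} relating spectral flow of a closed path to the Chern number, and \eqref{realcomplex} for invariance under complexification), together with standard algebraic topology (Hurewicz surjectivity onto $H_1$ for a path-connected space, and non-degeneracy of the Kronecker pairing with $\mathbb{Z}$ coefficients modulo torsion, which does not interfere here because we only need the existence of \emph{some} $\xi$ with non-zero pairing). The one point requiring a little care — and the mild obstacle in the argument — is the ``only if'' direction: one must ensure that the loop realizing $\xi$ can be chosen based at the specific point $\lambda_0$ from assumption (i); this is exactly why the statement requires both (i) and (ii) rather than (ii) alone, and it is handled by applying the Hurewicz homomorphism to $\pi_1(X,\lambda_0)$ rather than to an abstract fundamental group. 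No compactness or manifold hypotheses on $X$ are needed beyond path-connectedness, which is why the lemma is stated in this greater generality than Theorem \ref{mainbif}.
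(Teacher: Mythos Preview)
Your proof is correct and follows essentially the same approach as the paper's, using the same ingredients (non-degeneracy of the Kronecker pairing, Hurewicz surjectivity, Lemma~\ref{Booss}, and naturality of $c_1$ and $\sind$). The only difference is cosmetic: in the ``only if'' direction the paper first finds an arbitrary loop detecting $c_1(\sind L^\mathbb{C})$ and then rebases it at $\lambda_0$ via a connecting path, whereas you apply the Hurewicz map directly to $\pi_1(X,\lambda_0)$ and thereby obtain a correctly based loop in one step---a mild streamlining of the same argument.
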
 

\begin{proof}
Let the assumptions (i) and (ii) hold and let us set $c:=c_1(\sind L^\mathbb{C})\in H^1(X;\mathbb{Z})$. It is a general fact from algebraic topology that the non-triviality of $c$ entails the existence of a closed path $\gamma:S^1\rightarrow X$ such that $\gamma^\ast c\neq 0\in H^1(S^1;\mathbb{Z})$. Indeed, otherwise $0=\langle\gamma^\ast c,[S^1]\rangle=\langle c,\gamma_\ast[S^1]\rangle$ for all $\gamma:S^1\rightarrow X$, where $[S^1]$ denotes as before the standard generator of $H_1(S^1;\mathbb{Z})$ and $\langle\cdot,\cdot\rangle:H^1(X;\mathbb{Z})\times H_1(X;\mathbb{Z})\rightarrow\mathbb{Z}$ is the duality pairing. Since $\gamma_\ast[S^1]\in H_1(X;\mathbb{Z})$ is the image of $\gamma\in\pi_1(X)$ under the surjective Hurewicz homomorphism $\pi_1(X)\rightarrow H_1(X;\mathbb{Z})$, we infer that $\langle c,x\rangle=0$ for all $x\in H_1(X;\mathbb{Z})$. This shows that $c=0$ as the duality pairing is non-degenerate.\\
Now let $\tilde{\gamma}:S^1\rightarrow X$ be a path that pulls back $c$ to a non trivial element and let $\lambda_0$ be as in (i). Since $X$ is connected, there is a path $\gamma'$ that connects $\lambda_0$ and some point of $\tilde{\gamma}(S^1)$. We define a new closed path $\hat{\gamma}:S^1\rightarrow X$ by running at first through $\gamma'$, then passing $\tilde{\gamma}$ and finally running back to $\lambda_0$ through $\gamma'$ in inverse direction. Then $\hat{\gamma}\simeq\tilde{\gamma}:S^1\rightarrow X$ are homotopic and we obtain by using the functoriality of $c_1$ and $\sind$

\begin{align*}
0\neq\tilde{\gamma}^\ast\,c=\hat{\gamma}^\ast\,c=c_1(\hat{\gamma}^\ast\sind( L^\mathbb{C}))=c_1(\sind(L^\mathbb{C}\circ\hat{\gamma}))\in H^1(S^1;\mathbb{Z}).
\end{align*}
We set $\gamma:=\hat{\gamma}\circ q$, where $q:(I,\partial I)\rightarrow (S^1,1)$ is the canonical quotient map and get by Lemma \ref{Booss} and \eqref{realcomplex} that

\[\sfl(L\circ\gamma,I)=\sfl(L^\mathbb{C}\circ\gamma,I)=\langle c_1(\sind(L^\mathbb{C}\circ\hat{\gamma})),[S^1]\rangle\neq 0.\]
Conversely, let us assume that a closed path $\gamma:I\rightarrow X$ as in the assertion is given, and let $\overline{\gamma}:S^1\rightarrow X$ be a path such that $\overline{\gamma}\circ q=\gamma$. Then (i) holds trivially, and by using again \eqref{realcomplex} and Lemma \ref{Booss} we get that 

\begin{align*}
0&\neq\sfl(L\circ\gamma,I)=\sfl(L^\mathbb{C}\circ\gamma,I)=\langle c_1(\sind(L^\mathbb{C}\circ\overline{\gamma})),[S^1]\rangle\\
&=\langle\overline{\gamma}^\ast\,c_1(\sind L^\mathbb{C}),[S^1]\rangle=\langle c_1(\sind L^\mathbb{C}),\overline{\gamma}_\ast[S^1]\rangle.
\end{align*} 
Consequently, $c_1(\sind L^\mathbb{C})\neq 0\in H^1(X;\mathbb{Z})$, which is (ii). 
\end{proof}
\noindent
In \cite{Ale} the first named author studied multiparameter bifurcation for families of geodesics in semi-Riemannian manifolds which are parametrised by a smooth compact oriented manifold $X$. Let us briefly explain why Theorem \ref{mainbif} does not apply to this setting. By introducing local coordinates, the problem of bifurcation of geodesics can be reduced to bifurcation of critical points of a family of functionals $f:X\times E\rightarrow\mathbb{R}$ as above, where $E$ is the common Sobolev space $H^1_0(I,\mathbb{R}^k)$. The corresponding operators $L_\lambda$ are of the form

\[\langle L_\lambda u,v\rangle_E=\int^1_0{\langle\mathcal{J}u'(t),v'(t)\rangle dt}-\int^1_0{\langle S_\lambda(t)u(t),v(t)\rangle dt},\quad u,v\in E,\]
where $\mathcal{J}=\diag(1,\ldots,1,-1,\ldots,-1)$ and $S_\lambda(t)$ is a smooth family of symmetric matrices (cf. \cite{MussoPejsachowicz}). These operators are selfadjoint and Fredholm, and all basic assumptions of Theorem \ref{mainbif} are satisfied. However, if we write $L_\lambda=T+K_\lambda$, where 

\[\langle T u,v\rangle_E=\int^1_0{\langle\mathcal{J}u'(t),v'(t)\rangle dt},\quad u,v\in E,\]
then it is readily seen that $T\in\Phi_S(E)$ is invertible. Moreover, as $\langle K_\lambda u,v\rangle_E$, $u,v\in E$, extends to a bounded bilinear form on $L^2(I,\mathbb{R}^k)$ and the embedding $H^1_0(I,\mathbb{R}^k)\hookrightarrow L^2(I,\mathbb{R}^k)$ is compact by the Rellich compactness theorem, it is also easily seen that $K_\lambda$ is compact for all $\lambda\in X$. Hence, we obtain by Lemma \ref{indtriv} that $\sind(L^\mathbb{C})=0\in K^{-1}(X)$ and so Theorem \ref{mainbif} cannot provide any information about possible bifurcation points. However, let us point out that the multiparameter bifurcation problem for geodesics in semi-Riemannian manifolds was investigated later by the authors in \cite{AleBifIch}.


\section{Homoclinics of Hamiltonian Systems}
In this section we deal with homoclinic solutions of families of Hamiltonian systems \eqref{Hamiltoniannonlin}. We introduce at first a family of functionals having as critical points the solutions of \eqref{Hamiltoniannonlin} and subsequently we show that the bifurcation invariant in Theorem \ref{mainbif} can indeed be non-trivial in this setting.

\subsection{The Variational Setting}
For a compact topological space $X$ let $\mathcal{H}:X\times\mathbb{R}\times\mathbb{R}^{2n}\rightarrow\mathbb{R}$ be a continuous map such that $\mathcal{H}_\lambda:=\mathcal{H}(\lambda,\cdot,\cdot):\mathbb{R}\times\mathbb{R}^{2n}\rightarrow\mathbb{R}$ is $C^2$ for all $\lambda\in X$ and its derivatives depend continuously on $\lambda\in X$. As in the introduction, we consider the family of Hamiltonian systems

\begin{equation}\label{HamiltoniannonlinII}
\left\{
\begin{aligned}
Ju'(t)+\nabla_u \mathcal{H}_\lambda(t,u(t))&=0,\quad t\in\mathbb{R}\\
\lim_{t\rightarrow\pm\infty}u(t)&=0.
\end{aligned}
\right.
\end{equation}
In what follows, we assume that

\begin{align}\label{Hgrowth}
\mathcal{H}_\lambda(t,u)=\frac{1}{2}\langle A(\lambda,t)u,u\rangle+G(\lambda,t,u),
\end{align}
where $A:X\times\mathbb{R}\rightarrow\mathcal{L}(\mathbb{R}^{2n})$ is a family of symmetric matrices, $G(\lambda,t,u)$ vanishes up to second order at $u=0$, and there are $p>0$, $C\geq 0$ and $g\in H^1(\mathbb{R},\mathbb{R})$ such that

\begin{align}\label{Ggrowth}
|D^2_uG(\lambda,t,u)|\leq g(t)+C|u|^p,\quad u\in\mathbb{R}^{2n},\, t\in\mathbb{R}.
\end{align}
Moreover, we assume that $A_\lambda:=A(\lambda,\cdot):\mathbb{R}\rightarrow\mathcal{L}(\mathbb{R}^{2n})$ converges uniformly in $\lambda$ to families 

\[A_\lambda(+\infty):=\lim_{t\rightarrow\infty}A_\lambda(t),\quad A_\lambda(-\infty):=\lim_{t\rightarrow-\infty}A_\lambda(t),\quad\lambda\in X,\]
and the matrices $JA_\lambda(\pm\infty)$ are hyperbolic, i.e. they have no eigenvalues on the imaginary axis.\\
Note that by \eqref{Hgrowth}, $\nabla_u \mathcal{H}_\lambda(t,0)=0$ for all $(\lambda,t)\in I\times\mathbb{R}$, and so $u\equiv 0$ is a solution of \eqref{HamiltoniannonlinII} for all $\lambda\in X$.\\
Let $C^1_0(\mathbb{R},\mathbb{R}^{2n})$ be the Banach space of all continuously differentiable $\mathbb{R}^{2n}$-valued functions $u$ such that $u$ and $u'$ vanish at infinity, where the norm is defined by

\[\|u\|=\sup_{t\in\mathbb{R}}|u(t)|+\sup_{t\in\mathbb{R}}|u'(t)|.\]

\begin{defi}\label{defibif}
We call $\lambda^\ast\in X$ a bifurcation point for homoclinic solutions from the stationary branch if every neighbourhood of $(\lambda^\ast,0)\in X\times C^1_0(\mathbb{R},\mathbb{R}^{2n})$ contains a non-trivial solution $(\lambda,u)$ of \eqref{HamiltoniannonlinII}.
\end{defi}
\noindent
Let us now briefly recall the variational formulation of the equations \eqref{HamiltoniannonlinII} from \cite[\S 4]{Jacobo}. The bilinear forms $b(u,v)=\langle J u',v\rangle_{L^2(\mathbb{R},\mathbb{R}^{2n})}$, $u,v\in H^1(\mathbb{R},\mathbb{R}^{2n})$, extend to bounded forms on the well known fractional Sobolev space $H^\frac{1}{2}(\mathbb{R},\mathbb{R}^{2n})$, which can be described in terms of Fourier transforms (cf. eg. \cite[\S 10]{Stuart}). Under the assumption \eqref{Hgrowth}, 

\[f_\lambda:H^\frac{1}{2}(\mathbb{R},\mathbb{R}^{2n})\rightarrow\mathbb{R},\quad f_\lambda(u)=b(u,u)+\int^\infty_{-\infty}{\langle A(\lambda,t)u(t),u(t)\rangle\,dt}+\int^\infty_{-\infty}{G(\lambda,t,u(t))\,dt}\]
are $C^2$-functionals such that $f:X\times H^\frac{1}{2}(\mathbb{R},\mathbb{R}^{2n})\rightarrow\mathbb{R}$ is continuous and all its derivatives are continuous as in \eqref{contder}. A careful examination of $f$ shows that the critical points of $f_\lambda$ belong to $C^1_0(\mathbb{R},\mathbb{R}^{2n})$ and are the classical solutions of the differential equation \eqref{HamiltoniannonlinII}. Moreover, every bifurcation point of critical points of $f$ is a bifurcation point of \eqref{HamiltoniannonlinII} in the sense of Definition \ref{defibif}. Finally,
the second derivative of $f_\lambda$ at the critical point $0\in H^\frac{1}{2}(\mathbb{R},\mathbb{R}^{2n})$ is given by 

\[d^2_0f_\lambda(u,v)=b(u,v)+\int^\infty_{-\infty}{\langle A(\lambda,t)u(t),v(t)\rangle\,dt}\]
and, by using the hyperbolicity of $J A_\lambda(\pm\infty)$, it can be shown that the corresponding Riesz representations $L_\lambda:H^\frac{1}{2}(\mathbb{R},\mathbb{R}^{2n})\rightarrow H^\frac{1}{2}(\mathbb{R},\mathbb{R}^{2n})$ are Fredholm. It follows by elliptic regularity that elements in the kernel of $L_\lambda$ are precisely the solutions of the linear differential equation

\begin{equation}\label{Hamiltonianlin}
\left\{
\begin{aligned}
Ju'(t)+A(\lambda,t)u(t)&=0,\quad t\in\mathbb{R}\\
\lim_{t\rightarrow\pm\infty}u(t)&=0.
\end{aligned}
\right.
\end{equation}
Consequently, we obtain from Theorem \ref{mainbif}:

\begin{theorem}\label{mainHam}
Let $X$ be a compact orientable topological manifold of dimension $k\geq 2$, and assume that

\begin{itemize}
	\item[(i)] there is $\lambda_0\in X$ such that \eqref{Hamiltonianlin} has no non-trivial solutions,
	\item[(ii)] $c_1(\sind L^\mathbb{C})\neq 0\in H^1(X;\mathbb{Z})$.
\end{itemize} 
Then the Lebesgue covering dimension of the set $B$ of all bifurcation points of \eqref{HamiltoniannonlinII} in $X$ is at least $k-1$, and $B$ is not contractible to a point. 
\end{theorem}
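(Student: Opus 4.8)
The plan is to deduce Theorem~\ref{mainHam} from Theorem~\ref{mainbif} applied to the family of functionals $f:X\times H^{\frac12}(\mathbb{R},\mathbb{R}^{2n})\to\mathbb{R}$ constructed in the variational setting. First I would record that all standing hypotheses of Section~\ref{section:functionals} are in force: $f$ is continuous, each $f_\lambda$ is $C^2$ with the two derivative maps of \eqref{contder} continuous, the stationary branch consists of critical points so that $d_0f_\lambda=0$ for every $\lambda$, and the Riesz representations $L_\lambda$ of $d^2_0f_\lambda$ assemble into a family $L:X\to\Phi_S(H^{\frac12})$ of selfadjoint Fredholm operators. It then remains only to match the two hypotheses of Theorem~\ref{mainHam} with those of Theorem~\ref{mainbif}: hypothesis (ii) is literally identical, and for hypothesis (i) I would invoke the elliptic regularity statement that $\ker L_{\lambda_0}$ coincides with the solution space of the linear system \eqref{Hamiltonianlin}; since $L_{\lambda_0}$ is selfadjoint and Fredholm (hence of index zero with closed range), it is invertible if and only if that kernel is trivial, which is exactly the assumption that \eqref{Hamiltonianlin} admits no non-trivial solution. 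Thus (i) and (ii) of Theorem~\ref{mainbif} hold for $L$.

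The only genuinely non-formal point is the distinction between the set $B_f\subset X$ of bifurcation points of critical points of $f$ (inside $X\times H^{\frac12}(\mathbb{R},\mathbb{R}^{2n})$) and the set $B\subset X$ of bifurcation points of \eqref{HamiltoniannonlinII} (inside $X\times C^1_0(\mathbb{R},\mathbb{R}^{2n})$). We only know $B_f\subset B$, not equality, so rather than quoting Theorem~\ref{mainbif} as a black box I would re-run its proof with $B$ in place of $B_f$. Step~1 there uses only that the bifurcation set is closed (true here by Definition~\ref{defibif}) and disconnects $X$, while Step~2 uses only that a loop $\gamma$ in $X\setminus B$ based at $\lambda_0$ produces a one-parameter family $\overline f$ with invertible endpoint operators and no bifurcation of its critical points. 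Since such a loop also avoids $B_f\subset B$, the one-parameter bifurcation theorem of \cite{BifJac} forces $\sfl(\overline L,I)=0$, which contradicts $\langle c_1(\sind L^\mathbb{C}),\xi\rangle\neq 0$ exactly as in the proof of Theorem~\ref{mainbif}. In either case one arrives at $\check{H}^{k-1}(B;\mathbb{Z})\neq 0$, whence $B$ is not contractible to a point and $\dim B\geq k-1$ by Lemma~\ref{cech}.

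I expect this bookkeeping with the two bifurcation sets to be the main (and essentially only) obstacle: one must be careful that bifurcation in the $C^1_0$-topology is the conclusion one wants, while bifurcation of critical points of $f$ in the $H^{\frac12}$-topology is what the spectral-flow machinery controls, and that the inclusion $B_f\subset B$ points in the direction that lets the proof of Theorem~\ref{mainbif} transfer (a loop avoiding $B$ avoids $B_f$, hence carries no bifurcation of critical points). Everything else is a direct appeal to the variational construction recalled above and to Theorem~\ref{mainbif} together with its proof.
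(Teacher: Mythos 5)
Your proposal is correct and follows the paper's own route, which is simply to feed the functional $f$ from the variational setting into Theorem~\ref{mainbif}. The paper states this as a one-line deduction, and you have rightly flagged the point it glosses over: only the inclusion $B_f\subset B$ of the two bifurcation sets is asserted, so while the dimension estimate does transfer formally (both $B_f$ and $B$ are closed in the compact manifold $X$, hence $B_f$ is a closed subset of the normal space $B$ and $\dim B\geq\dim B_f\geq k-1$ by monotonicity of covering dimension), the non-contractibility of $B$ is not a formal consequence of the non-contractibility of $B_f$ and genuinely requires re-running Step~2 of the proof of Theorem~\ref{mainbif} with $B$ in place of $B_f$, using that a loop avoiding $B$ also avoids $B_f$ and therefore carries zero spectral flow by \cite{BifJac} — exactly as you describe.
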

\noindent
Of course, it is now important to ensure that the assumptions of Theorem \ref{mainHam} can indeed occur, which is the aim of the following section. 

\subsection{A Non-Trivial Example}
We consider for $k\in\mathbb{N}$, $k\geq 2$, the $k$-torus $T^k=S^1\times\cdots\times S^1$ and we identify points $\lambda=(\lambda_1,\ldots,\lambda_k)\in T^k$ with elements $(\Theta_1,\ldots,\Theta_k)\in[-\pi,\pi]^k$ by $\lambda_j=e^{i\Theta_j}$, $1\leq j\leq k$. Note that, as $H^1(T^k;\mathbb{Z})\cong\mathbb{Z}^k$, this is a suitable space for finding non-trivial examples of our bifurcation invariant.\\
In what follows, we consider families of Hamiltonian systems \eqref{HamiltoniannonlinII} for $n=1$ which are parametrised by $X=T^k$. Let $G:T^k\times\mathbb{R}\times\mathbb{R}^{2}\rightarrow\mathbb{R}$ be any map as in \eqref{Hgrowth} satisfying \eqref{Ggrowth}, and

\begin{align*}
A(\lambda,t)=\begin{cases}
(\arctan t)JS_{\Theta_1+\ldots+\Theta_k},\quad &t\geq 0\\
(\arctan t) JS_0,\quad &t<0,
\end{cases}
\end{align*}
where

\[S_{\Theta}=\begin{pmatrix}
\cos\Theta&\sin\Theta\\
\sin\Theta&-\cos\Theta
\end{pmatrix}.\]
Note that the matrices $A(\lambda,t)$ are indeed symmetric, and

\[\lim_{t\rightarrow+\infty}JA(\lambda,t)=-\frac{\pi}{2}S_{\Theta_1+\ldots+\Theta_k},\quad \lim_{t\rightarrow-\infty}JA(\lambda,t)=\frac{\pi}{2}S_{0} \]
are hyperbolic, where the convergence is uniform in $\lambda=(\lambda_1,\ldots,\lambda_k)$.\\
As before, we let $B\subset T^k$ be the set of all bifurcation points as in Definition \ref{defibif}, and we now claim:

\begin{theorem}\label{example}
The covering dimension of $B$ is at least $k-1$, and $B$ is not contractible to a point. 
\end{theorem}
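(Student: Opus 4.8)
The plan is to apply Theorem~\ref{mainHam} (equivalently Theorem~\ref{mainbif}) to the family constructed above, which by Lemma~\ref{lemma-invsfl} reduces to exhibiting a single closed loop $\gamma$ in $T^k$ based at a point $\lambda_0$ where \eqref{Hamiltonianlin} has only the trivial solution and along which the spectral flow of $L\circ\gamma$ is non-zero. First I would choose $\lambda_0$ to be the point with $\Theta_1=\cdots=\Theta_k=0$, so that $S_{\Theta_1+\cdots+\Theta_k}=S_0$ and the asymptotic matrices at $\pm\infty$ agree; I would check (using the hyperbolicity of $\pm\tfrac{\pi}{2}S_0$ and a standard computation with stable/unstable subspaces of the constant-coefficient system, or by invoking a Morse-index/spectral argument for the symmetric operator $L_{\lambda_0}$) that \eqref{Hamiltonianlin} has no non-trivial homoclinic at $\lambda_0$, giving assumption~(i).

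Next I would exploit the fact that $A(\lambda,t)$ depends on $\lambda$ only through the single angle $\Theta:=\Theta_1+\cdots+\Theta_k\pmod{2\pi}$. Thus the family $L:T^k\to\Phi_S(H)$ factors as $L=\tilde L\circ\sigma$, where $\sigma:T^k\to S^1$ is the ``sum of angles'' homomorphism $\sigma(\lambda)=\lambda_1\cdots\lambda_k$ and $\tilde L:S^1\to\Phi_S(H)$ is the one-parameter family built from $S_\Theta$. Taking $\gamma_j:I\to T^k$ to be the loop that traverses the $j$-th circle factor once, we have $\sigma\circ\gamma_j\simeq \mathrm{id}_{S^1}$, so $\sfl(L\circ\gamma_j,I)=\sfl(\tilde L,S^1)$ is independent of $j$; call this integer $m$. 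It then suffices to show $m\neq 0$, for then $\langle c_1(\sind L^{\mathbb C}),(\gamma_j)_\ast[S^1]\rangle=m\neq 0$ by Lemma~\ref{Booss} and \eqref{realcomplex}, giving assumption~(ii), and Theorem~\ref{mainHam} yields the conclusion (note $k\geq 2$ so $T^k$ is a compact orientable manifold of dimension $\geq 2$).

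The computation of $m=\sfl(\tilde L,S^1)$ is the crux. Here I would use the known identification of the spectral flow of a path of operators $L_\Theta$ of the form $Ju'+A(\Theta,t)u$ with a difference of Maslov-type indices / Conley--Zehnder indices of the asymptotic paths, or more concretely count eigenvalue crossings of $L_\Theta$ at $0$ as $\Theta$ runs over $[-\pi,\pi]$. Because the asymptotic behaviour at $t\to-\infty$ is frozen (it is always $\tfrac{\pi}{2}S_0$) while at $t\to+\infty$ it rotates with $\Theta$ through the family $-\tfrac{\pi}{2}S_\Theta$, the crossing count picks up exactly the winding of the corresponding stable subspace in the Lagrangian Grassmannian, which for this explicit rank-one family is $\pm1$; the sign depends on orientation conventions but is irrelevant. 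Concretely, as $\Theta$ passes through $0$ (or through $\pm\pi$, whichever is forced) the linear system \eqref{Hamiltonianlin} with this $A$ acquires a one-dimensional space of homoclinic solutions, producing a single transverse zero-crossing of an eigenvalue of $L_\Theta$, hence $|m|=1\neq 0$.

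The main obstacle is precisely this last step: one must verify rigorously that the loop $\tilde L$ has non-vanishing spectral flow, i.e. that there is exactly one (transverse) parameter value on the circle at which \eqref{Hamiltonianlin} becomes degenerate and that the crossing is non-degenerate with a definite sign. I expect this to follow by writing down the stable subspace $E^s_\Theta(+\infty)\subset\mathbb{R}^2$ of $-\tfrac{\pi}{2}S_\Theta$ (a line depending on $\Theta$) and the unstable subspace of $\tfrac{\pi}{2}S_0$ at $-\infty$, flowing them to $t=0$, and observing that they coincide for precisely one value of $\Theta$ with a simple crossing — a short explicit ODE computation in dimension two. Once $|m|=1$ is established, assumptions (i) and (ii) of Theorem~\ref{mainHam} are verified and the theorem is immediate.
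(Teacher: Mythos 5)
Your overall strategy is the same as the paper's: reduce to a loop via Lemma~\ref{lemma-invsfl}, exploit that $A(\lambda,t)$ depends on $\lambda$ only through $\Theta_1+\cdots+\Theta_k$, and detect a single transverse crossing to show $\sfl\neq 0$. However, there are two concrete issues.

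First, your choice of $\lambda_0$ is exactly wrong. At $\Theta_1=\cdots=\Theta_k=0$ the equation \eqref{Hamiltonianlin} does admit a non-trivial homoclinic, namely
\[
u_\ast(t)=\sqrt{t^2+1}\,e^{-t\arctan t}\begin{pmatrix}1\\0\end{pmatrix},
\]
and this is the \emph{unique} crossing point of the loop. Your heuristic that agreement of the asymptotic matrices at $\pm\infty$ should preclude homoclinics is not valid: hyperbolicity only guarantees that the stable space $E^s(\Theta_1,0)$ and unstable space $E^u(\Theta_1,0)$ are Lagrangian lines in $(\mathbb{R}^2,\omega_0)$; it says nothing about their transversality, which depends on the evolution in between, and here the lines coincide precisely at $\Theta_1=0$. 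The basepoint should instead be taken at, e.g., $\Theta_1=\pi$ (other angles zero), where the explicit half-line solutions $u_-(0)=(1,0)^T$ and $u_+(0)=(\cos(\Theta_1/2),\sin(\Theta_1/2))^T$ are linearly independent.

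Second, and more substantially, you never carry out the spectral-flow computation; you flag it yourself as the main obstacle and leave it as an expectation. This is where the actual work lies. The paper solves the half-line equations explicitly to locate the unique crossing at $\Theta_1=0$ with $\ker\widetilde L_0=\mathrm{span}\{u_\ast\}$, and then invokes the crossing-form theorem (Theorem~\ref{crossings}, from \cite{SFLPejsachowicz} and \cite{Robbin-Salamon}) to compute
\[
\Gamma(\widetilde L,0)[u_\ast]=\int_0^\infty \arctan(t)\,\langle J\dot S_0\,u_\ast(t),u_\ast(t)\rangle\,dt=-\int_0^\infty \arctan(t)\,(t^2+1)\,e^{-2t\arctan t}\,dt<0,
\]
so the crossing is regular of signature $-1$ and $\sfl(\widetilde L,I)=-1$. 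Your Lagrangian/Maslov-index picture of this is correct in spirit, but without the explicit ODE solutions and the crossing-form evaluation the regularity and sign of the crossing are unjustified; in particular nothing in your sketch rules out a degenerate crossing or two crossings of opposite sign cancelling. Once the above computation is supplied and $\lambda_0$ is moved off the crossing, the rest of your argument (factorisation through $\sigma:T^k\to S^1$, pulling back to each circle factor, applying Lemma~\ref{Booss} and Theorem~\ref{mainHam}) goes through.
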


\subsubsection*{Proof of Theorem \ref{example}}
By Theorem \ref{mainHam}, we have to show that there is $\lambda_0\in T^k$ such that $L_{\lambda_0}$ is invertible and that $c_1(\sind L^\mathbb{C})\neq 0$. We divide the proof into three steps:\\
\vspace{0.2cm}\\
\textbf{Step 1:} By Lemma \ref{lemma-invsfl} we only need to find a closed path $\gamma:[a,b]\rightarrow T^k$ such that $L_{\gamma(t)}$ is invertible for some $t\in [a,b]$ and $\sfl(L\circ\gamma,[a,b])\neq 0$. For this we set $\Theta_2=\ldots=\Theta_k=0$ so that our parameter space is $S^1$. Hence we have to compute the spectral flow of the path $\{\widetilde{L}_{\Theta_1}\}_{\Theta_1\in [-\pi,\pi]}$ given by 

\begin{align}\label{tildeL}
\langle\widetilde{L}_{\Theta_1}u,v\rangle_{H^\frac{1}{2}(S^1,\mathbb{R}^2)}=b(u,v)+\int^{\infty}_{-\infty}{\langle\widetilde{A}(\Theta_1,t)u(t),v(t)\rangle\, dt},
\end{align}
where

\begin{align*}
\widetilde{A}(\Theta_1,t)=\begin{cases}
(\arctan t)JS_{\Theta_1},\quad &t\geq 0\\
(\arctan t) JS_0,\quad &t<0,
\end{cases}
\end{align*}
and $\Theta_1\in[-\pi,\pi]$.\\
\vspace{0.1cm}\\
\textbf{Step 2:} By elliptic regularity, the operator $\widetilde{L}_{\Theta_1}$ is non-invertible if and only if the differential equation

\begin{equation}\label{Hamiltonianlinproof}
\left\{
\begin{aligned}
Ju'(t)+\widetilde{A}(\Theta_1,t)u(t)&=0,\quad t\in\mathbb{R}\\
\lim_{t\rightarrow\pm\infty}u(t)&=0,
\end{aligned}
\right.
\end{equation}
has a non-trivial solution.\\ 
Let us recall that the stable and unstable spaces of \eqref{Hamiltonianlinproof} are

\begin{align*}
E^s(\Theta_1,0)&=\{u(0)\in\mathbb{R}^2:\,Ju'(t)+\widetilde{A}(\Theta_1,t)u(t)=0,\, t\in\mathbb{R};\, u(t)\rightarrow 0, t\rightarrow\infty\}\\ 
E^u(\Theta_1,0)&=\{u(0)\in\mathbb{R}^2:\,Ju'(t)+\widetilde{A}(\Theta_1,t)u(t)=0,\, t\in\mathbb{R};\, u(t)\rightarrow 0, t\rightarrow-\infty\}.
\end{align*}
The space $\mathbb{R}^2$ is symplectic with respect to the canonical symplectic form $\omega_0(u,v)=\langle Ju,v\rangle_{\mathbb{R}^{2}}$. As $J\widetilde{A}(\Theta_1,t)$ converges uniformly in $\Theta_1$ to families of hyperbolic matrices for $t\rightarrow\pm\infty$, it can be shown that $E^s(\Theta_1,0)$ and $E^u(\Theta_1,0)$ are Lagrangian subspaces of $\mathbb{R}^2$ (cf. e.g. \cite[Lemma 4.1]{Homoclinics}). This implies in particular that $E^s(\Theta_1,0)$ and $E^u(\Theta_1,0)$ are one-dimensional for all $\Theta_1\in[-\pi,\pi]$.\\
Clearly, there is a non-trivial solution of \eqref{Hamiltonianlinproof} if and only if $E^u(\Theta_1,0)\cap E^s(\Theta_1,0)\neq \{0\}$. By a direct computation, one verifies that

\begin{align*}
u_-(t)&=\sqrt{t^2+1}e^{-t\arctan(t)}\begin{pmatrix} 1\\0\end{pmatrix},\, t\leq 0,\\ u_+(t)&=\sqrt{t^2+1}e^{-t\arctan(t)}\begin{pmatrix} \cos\left(\frac{\Theta_1}{2}\right)\\ \sin\left(\frac{\Theta_1}{2}\right)\end{pmatrix},\,t\geq 0,
\end{align*}
are solutions of \eqref{Hamiltonianlinproof} on the negative and positive half-line, respectively. As they extend to global solutions and since $t\arctan(t)\rightarrow \infty$ as $t\rightarrow\pm\infty$, we see that $u_-(0)\in E^u(\Theta_1,0)$ and $u_+(0)\in E^s(\Theta_1,0)$. As $u_+(0)$ and $u_-(0)$ are linearly dependent if and only if $\Theta_1=0$, we conclude that \eqref{Hamiltonianlinproof} has a non-trivial solution if and only if $\Theta_1=0$, which is given by

\[u_{\ast}(t)=\sqrt{t^2+1}e^{-t\arctan(t)}\begin{pmatrix} 1\\0\end{pmatrix},\quad t\in\mathbb{R}.\] 
Note that we have shown in particular that \eqref{Hamiltonianlinproof} has no non-trivial solution for $\Theta_1\neq 0$ and so the systems \eqref{HamiltoniannonlinII} satisfy Assumption (i) in Theorem \ref{mainHam}.\\
\vspace{0.1cm}\\
\textbf{Step 3:} In this final step of our proof we need to make at first a digression on a method for computing the spectral flow of paths, that was introduced by Robbin and Salamon in \cite{Robbin-Salamon} and Fitzpatrick, Pejsachowicz and Recht in \cite{SFLPejsachowicz}, respectively.\\
As in Section \ref{section:functionals}, let $L:I\rightarrow\Phi_S(E)$ be a path of selfadjoint Fredholm operators on a real separable Hilbert space. In contrast to Section \ref{section-sfl}, we denote here the parameter of $L$ by $\lambda$ to avoid confusion with the time variable $t$ of our Hamiltonian systems. We assume throughout that $L_0,L_1$ are invertible and that $L$ is differentiable with respect to the parameter $\lambda$. An instant $\lambda_0\in I$ is called a \textit{crossing} of $L$ if $L_{\lambda_0}$ is non-invertible, or equivalently $\ker L_{\lambda_0}\neq\{0\}$. The \textit{crossing form} is the quadratic form on the finite dimensional space $\ker L_{\lambda_0}$ defined by

\[\Gamma(L,\lambda_0):\ker L_{\lambda_0}\rightarrow\mathbb{R},\quad \Gamma(L,\lambda_0)[u]=\langle\dot{L}_{\lambda_0} u,u\rangle_E,\]
where $\dot L_{\lambda_0}$ denotes the derivative with respect to the parameter at $\lambda_0$. A crossing $\lambda_0$ is called \textit{regular} if $\Gamma(L,\lambda_0)$ is non-degenerate, and it can be shown that regular crossings are isolated. The following theorem can be found in \cite[Thm. 4.1]{SFLPejsachowicz}.

\begin{theorem}\label{crossings}
If $L:I\rightarrow\Phi_S(H)$ has invertible endpoints and only regular crossings, then

\[\sfl(L,I)=\sum_{\lambda\in I}{\sgn(\Gamma(L,\lambda))},\]
where $\sgn$ denotes the signature of the quadratic form $\Gamma(L,\lambda)$.
\end{theorem}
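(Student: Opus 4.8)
The plan is to localize at the crossings and then reduce to finite-dimensional linear algebra. Since $L_a,L_b$ are invertible, the endpoints are not crossings, and as regular crossings are isolated and $I$ is compact there are only finitely many crossings $\lambda_1<\cdots<\lambda_m$ in the interior of $I$. Choosing $a=s_0<s_1<\cdots<s_m=b$ with $L_{s_j}$ invertible and exactly one crossing in each $(s_{j-1},s_j)$, the concatenation property (i) of the spectral flow from Section~\ref{section-sfl} gives $\sfl(L,I)=\sum_{j=1}^m\sfl(L,[s_{j-1},s_j])$, while the right-hand side of the claimed identity is already a finite sum $\sum_{j=1}^m\sgn\Gamma(L,\lambda_j)$ (at a non-crossing $\ker L_\lambda=\{0\}$, so the corresponding signature is $0$). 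Hence it suffices to prove the local statement: if $L$ restricted to a short compact interval $[\alpha,\beta]$ has invertible endpoints and a single regular crossing $\lambda_0\in(\alpha,\beta)$, then $\sfl(L,[\alpha,\beta])=\sgn\Gamma(L,\lambda_0)$.

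For the local statement I would pass to the spectral subspace as in Section~\ref{section-sfl}: pick $\Lambda>0$ and, after shrinking $[\alpha,\beta]$, a neighbourhood $N(L_{\lambda_0},\Lambda)$ so that $\pm\Lambda\notin\sigma(L_\lambda)$ and $W_\lambda:=E_{[-\Lambda,\Lambda]}(L_\lambda)$ has constant finite dimension; the eigenvalues of $L_\lambda$ outside $[-\Lambda,\Lambda]$ never reach $0$, so by \eqref{sfl} the spectral flow of $L$ on $[\alpha,\beta]$ equals that of the restricted family $L_\lambda|_{W_\lambda}\colon W_\lambda\to W_\lambda$. Using the Riesz-integral formula, the spectral projection $P_\lambda$ onto $W_\lambda$ depends $C^1$ on $\lambda$ since $L$ does; transporting along a unitary propagator built from $P_\lambda$ (the Kato transformer) identifies the bundle $\{W_\lambda\}$ with the trivial bundle with fibre $W_{\lambda_0}$ and produces a $C^1$ path $\lambda\mapsto\ell_\lambda$ of symmetric operators on the finite-dimensional space $W_{\lambda_0}$, unitarily equivalent to $L_\lambda|_{W_\lambda}$ at each $\lambda$, with invertible endpoints and $\ker\ell_{\lambda_0}=\ker L_{\lambda_0}$. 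A short first-order perturbation computation — exploiting that $L_{\lambda_0}$ commutes with $P_{\lambda_0}$ and annihilates $\ker L_{\lambda_0}$, so that the derivatives of the transport terms drop out when sandwiched against vectors of $\ker L_{\lambda_0}$ — shows $\langle\dot\ell_{\lambda_0}u,u\rangle=\langle\dot L_{\lambda_0}u,u\rangle=\Gamma(L,\lambda_0)[u]$ for all $u\in\ker L_{\lambda_0}$.

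It then remains to prove a purely finite-dimensional fact: a $C^1$ path $\ell_\lambda$ of symmetric matrices with invertible endpoints and a single crossing $\lambda_0$ at which $\Gamma:=\langle\dot\ell_{\lambda_0}\cdot,\cdot\rangle|_{\ker\ell_{\lambda_0}}$ is non-degenerate satisfies $\sfl(\ell,[\alpha,\beta])=\sgn\Gamma$. Here I would note first that in finite dimensions, with invertible endpoints, $\sfl(\ell,[\alpha,\beta])=\tfrac12(\operatorname{sign}\ell_\beta-\operatorname{sign}\ell_\alpha)$, since the net number of signed zero-crossings equals the change in the number of positive eigenvalues. Splitting $W_{\lambda_0}=\ker\ell_{\lambda_0}\oplus(\ker\ell_{\lambda_0})^\perp$, the operator $\ell_{\lambda_0}$ is invertible on the second summand, so for $\lambda$ near $\lambda_0$ a Schur-complement (Lyapunov–Schmidt) reduction congruently deforms $\ell_\lambda$ to a block sum of an invertible piece with locally constant signature and a symmetric $c_\lambda$ on $\ker\ell_{\lambda_0}$ with $c_{\lambda_0}=0$ and $\dot c_{\lambda_0}=\Gamma$; thus $c_\lambda=(\lambda-\lambda_0)\Gamma+o(|\lambda-\lambda_0|)$ has signature $\sgn\Gamma$ just to the right of $\lambda_0$ and $-\sgn\Gamma$ just to the left, for a total jump of $2\sgn\Gamma$, whence $\sfl(\ell,[\alpha,\beta])=\sgn\Gamma$. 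Combining with the first paragraph yields $\sfl(L,I)=\sum_{\lambda\in I}\sgn\Gamma(L,\lambda)$.

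The main obstacle I expect is the middle paragraph: turning the operator family on $H$ into a genuinely $C^1$ path of symmetric matrices and identifying its derivative restricted to $\ker L_{\lambda_0}$ with the crossing form. This requires the $C^1$ dependence of the Riesz spectral projection together with the perturbation argument showing that the (a priori present) contribution of the transport/projection derivatives vanishes on the kernel; the rest is either the bookkeeping of property (i) of $\sfl$ or elementary inertia-type linear algebra.
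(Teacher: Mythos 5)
Note first that the paper does not prove Theorem~\ref{crossings} at all: it is quoted verbatim from \cite[Thm.\ 4.1]{SFLPejsachowicz}, so there is no in-text argument to compare yours with. Judged on its own, your sketch is a correct proof along the standard lines of \cite{Robbin-Salamon} and \cite{SFLPejsachowicz}: finiteness of the crossings plus additivity reduces everything to one regular crossing; the Riesz projection onto $E_{[-\Lambda,\Lambda]}(L_\lambda)$ together with the Kato transformer turns the local problem into a differentiable path $\ell_\lambda$ of symmetric endomorphisms of a fixed finite-dimensional space; the conjugation terms are killed on $\ker L_{\lambda_0}$ exactly as you say, so $\dot\ell_{\lambda_0}$ compressed to the kernel is the crossing form; and the Schur-complement step correctly identifies the jump of the signature across $\lambda_0$ with $2\,\sgn\Gamma(L,\lambda_0)$, which combined with the finite-dimensional identity $\sfl(\ell,[\alpha,\beta])=\tfrac{1}{2}(\mathrm{sign}\,\ell_\beta-\mathrm{sign}\,\ell_\alpha)$ gives the local statement. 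This is essentially the argument one finds in the cited literature, so you are not on a genuinely different route, just supplying the proof the paper outsources.

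A few points you should tighten if you write this up in full. The finiteness of the set of crossings uses not only that regular crossings are isolated but also that the set $\{\lambda:\ker L_\lambda\neq\{0\}\}$ is closed (invertibility is open), so that an accumulation point of crossings would itself be a crossing. The Kato transformer is obtained by solving the ODE $\dot U_\lambda=[\dot P_\lambda,P_\lambda]U_\lambda$, which needs $\lambda\mapsto P_\lambda$ to be $C^1$ and hence $L$ to be continuously differentiable, not merely differentiable as in the standing assumption preceding the theorem; this matches the hypotheses of \cite{SFLPejsachowicz} but should be stated. Finally, the identity $\sfl(\ell,[\alpha,\beta])=\tfrac{1}{2}(\mathrm{sign}\,\ell_\beta-\mathrm{sign}\,\ell_\alpha)$ and the equality of $\sfl(L,[\alpha,\beta])$ with the spectral flow of the restricted family both deserve a line of justification directly from the definition \eqref{sfl}, using a single spectral window $\Lambda$ valid on the (shrunken) interval and the fact that unitary conjugation preserves the eigenvalue count in $[0,\Lambda]$; neither is difficult, but as written they are asserted rather than derived.
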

\noindent
Note that, as regular crossings are isolated, $\Gamma(L,\lambda)=0$ for all but finitely many $\lambda\in I$ and so it is sensible to sum up all $\sgn(\Gamma(L,\lambda))$.\\
Let us now come back to the Hamiltonian systems \eqref{HamiltoniannonlinII} and the operators $\widetilde{L}_{\Theta_1}$ defined by \eqref{tildeL}. According to Step 2, $\Theta_1=0$ is the only crossing of $\widetilde{L}$ and $\ker(\widetilde{L}_0)$ is spanned by $u_\ast$. Hence we need to consider

\[\Gamma(\widetilde{L},0)[u_\ast]=\int^\infty_{-\infty}{\left\langle\dot{\widetilde{A}}(0,t)u_\ast(t),u_{\ast}(t)\right\rangle\,dt},\]
where

\begin{align*}
\dot{\widetilde{A}}(0,t)=\begin{cases}
(\arctan t)J\dot{S}_{0},&\quad t\geq 0\\
0,&\quad t<0,
\end{cases}
\end{align*}
and

\[\dot{S}_{0}=\begin{pmatrix}
0&1\\
1&0	
\end{pmatrix}.\]
We obtain

\begin{align*}
\Gamma(\widetilde{L},0)[u_\ast]&=\int^\infty_{0}{\left\langle\dot{\widetilde{A}}(0,t)u_\ast(t),u_{\ast}(t)\right\rangle\,dt}+\int^0_{-\infty}{\left\langle\dot{\widetilde{A}}(0,t)u_\ast(t),u_{\ast}(t)\right\rangle\,dt}\\
&=\int^\infty_{0}{\arctan(t)\langle J\dot{S}_0 u_{\ast}(t),u_{\ast}(t)\rangle\,dt}\\
&=-\int^\infty_{0}{\arctan(t)(t^2+1)e^{-2t\arctan(t)} \,dt}<0,
\end{align*}
which shows that $\Gamma(L,0)$ is non-degenerate and of signature $-1$. Consequently, by Theorem \ref{crossings}, $\sfl(\widetilde{L},I)=-1$ and so by Lemma \ref{lemma-invsfl} the assumptions of Theorem \ref{mainHam} are satisfied.


\appendix

\section{$K$-Theory and Chern Classes}\label{appendix-bundles}
The first aim of this section is to give a brief recapitulation of $K$-theory. For more details, we refer the reader to \cite{KTheoryAtiyah}.\\
Let $E$ and $F$ be complex vector bundles over a compact base space $X$ and let us denote for a finite dimensional complex linear space $V$ by $\Theta(V)=X\times V$ the product bundle. The set $\Vect(X)$ of all isomorphism classes of complex vector bundles over $X$ is an abelian monoid with respect to the direct sum $\oplus$, and consequently its Grothendieck completion is an abelian group that we denote by $K(X)$. As a matter of fact, elements in $K(X)$ can be written as formal differences $[E]-[F]$, where $[E],[F]$ denote the isomorphism classes in $\Vect(X)$, and $[E_1]-[F_1]=[E_2]-[F_2]$ if and only if there are $n,m\in\mathbb{N}\cup\{0\}$ such that 

\[E_1\oplus F_2\oplus\Theta(\mathbb{C}^n)\simeq E_2\oplus F_1\oplus\Theta(\mathbb{C}^m),\]
i.e. if these vector bundles are isomorphic. The sum operation in $K(X)$ is

\[([E_1]-[F_1])+([E_2]-[F_2])=[E_1\oplus F_2]-[E_2\oplus F_1],\]
the neutral element is $[E]-[E]$ for any bundle $E$ over $X$, and the inverse of $[E]-[F]$ is just $[F]-[E]$.\\
If $Y$ is another compact space and $f:Y\rightarrow X$ is a continuous map, then we obtain a homomorphism in $K$-theory by

\[f^\ast:K(X)\rightarrow K(Y),\quad [E]-[F]\mapsto [f^\ast E]-[f^\ast F],\]
where $f^\ast E$ and $f^\ast F$ denote the pullback bundles over $Y$. In particular, if $\lambda_0\in X$ is a base point, then we have a canonical inclusion $\iota:\{\lambda_0\}\hookrightarrow X$ which induces a homomorphism $\iota^\ast:K(X)\rightarrow K(\{\lambda_0\})\cong\mathbb{Z}$. The kernel of $K(X)$ is denoted by $\widetilde{K}(X)$ and called the \textit{reduced} $K$-theory group of $X$. Finally, we denote by $\Sigma X$ the suspension of $X$ and set $K^{-1}(X):=\widetilde{K}(\Sigma X)$. By extending maps on $X$ to $\Sigma X$ canonically, every continuous map $f:Y\rightarrow X$ between compact spaces induces a homomorphism $f^\ast:K^{-1}(X)\rightarrow K^{-1}(Y)$. Let us mention for the sake of completeness that there are $K$-theory groups $K^n(X)$ for all $n\in\mathbb{Z}$ and maps between them which make $K$-theory an extraordinary cohomology theory. However, as we do not need it in this paper, we do not go into the details.\\
The \textit{Chern classes} (cf. \cite{MiSta}) are a sequence of maps

\[c_k:\Vect(X)\rightarrow H^{2k}(X;\mathbb{Z}),\quad k\in\mathbb{N},\]
which assign to each complex vector bundle over $X$ even-dimensional cohomology classes such that

\begin{itemize}
	\item $c_k(f^\ast E)=f^\ast c_k(E)$ for $f:Y\rightarrow X$ and $Y$ compact,
	\item $c(E\oplus F)=c(E)\cup c(F)$ for $c:=1+c_1+c_2+\ldots\in H^\ast(X;\mathbb{Z})$,
	\item $c_i(E)=0$ if $i>\dim E$,
	\item if $X$ is homotopy equivalent to a CW-complex, then
	\[c_1:\Vect^1(X)\rightarrow H^2(X;\mathbb{Z})\]
	is a bijection, where $\Vect^1(X)=\{E\in\Vect(X):\,\dim E=1\}$.
\end{itemize}   
The second property in particular implies that $c_1(E\oplus F)=c_1(E)+c_1(F)$ which readily shows that $c_1$ descends to a group homomorphism $c_1:K(X)\rightarrow H^2(X;\mathbb{Z})$. As $H^{k+1}(\Sigma X;\mathbb{Z})\cong H^k(X;\mathbb{Z})$ for all $k\in\mathbb{N}$, the first Chern class can be extended to $K^{-1}(X)$ by

\[c_1:K^{-1}(X)=\widetilde{K}(\Sigma X)\xrightarrow{c_1}H^2(\Sigma X;\mathbb{Z})\cong H^1(X;\mathbb{Z}).\] 
In particular, we obtain a homomorphism $c_1:K^{-1}(S^1)\rightarrow H^1(S^1;\mathbb{Z})\cong\mathbb{Z}$ which can be shown to be bijective. Moreover, for every continuous map $f:Y\rightarrow X$ the diagram

\begin{align*}
\xymatrix{
K^{-1}(X)\ar[r]^{f^\ast}\ar[d]_{c_1}&K^{-1}(Y)\ar[d]^{c_1}\\
H^1(X;\mathbb{Z})\ar[r]^{f^\ast}&H^1(Y;\mathbb{Z})
}
\end{align*}
commutes, i.e. the first Chern class in odd $K$-theory is natural.


\section{Complexification of Real Hilbert Spaces}\label{appendix:complexification}

Let $E$ be a real Hilbert space with scalar product $\langle\cdot,\cdot\rangle_E$ and let us denote by $\mathcal{L}(E)$ the Banach space of all bounded linear operators on $E$. We define $E^\mathbb{C}:=\{u+iv:\, u,v\in E\}$ and

\begin{align*}
(x+iy)(u+iv)&:=(xu-yv)+i(yu+xv),\quad x+iy\in\mathbb{C}, u+iv\in E^\mathbb{C},\\
(u_1+iv_1)+(u_2+iv_2)&:=(u_1+u_2)+i(v_1+v_2),\quad u_1+iv_1,u_2+iv_2\in E^\mathbb{C},\\
\langle u_1+iv_1,u_2+iv_2\rangle_{E^\mathbb{C}}&:=\langle u_1,u_2\rangle_{E}+\langle v_1,v_2\rangle_{E}\\
&-i\langle u_1,v_2\rangle_{E}+i\langle v_1,u_2\rangle_{E},\quad u_1+iv_1,u_2+iv_2\in E^\mathbb{C},
\end{align*}
which turns $E^\mathbb{C}$ into a complex Hilbert space.\\
If $T\in\mathcal{L}(E)$, we obtain a bounded linear operator $T^\mathbb{C}\in\mathcal{L}(E^\mathbb{C})$ by

\[T^\mathbb{C}(u+iv):=Tu+iTv.\] 
It is readily seen from the definitions that the adjoints of $T$ and $T^\mathbb{C}$ satisfy $(T^\mathbb{C})^\ast=(T^\ast)^\mathbb{C}$, so that in particular $T$ is selfadjoint if and only if $T^\mathbb{C}$ is selfadjoint. As

\begin{align}\label{kercomp}
\ker(T^\mathbb{C})=(\ker T)^\mathbb{C},\quad \im(T^\mathbb{C})=(\im T)^\mathbb{C}
\end{align} 
we also obtain immediately that $T\in\Phi(E)$ if and only if $T^\mathbb{C}\in\Phi(E^\mathbb{C})$. By the previous fact, the same holds for $\Phi_S(E)$ and $\Phi_S(E^\mathbb{C})$. Finally, the first equality in \eqref{kercomp} shows that $\lambda\in\mathbb{R}$ is an eigenvalue of $T$ if and only if it is an eigenvalue of $T^\mathbb{C}$. Moreover, the corresponding eigenspaces satisfy

\begin{align}\label{complexeigenspace}
E(T,\lambda)^\mathbb{C}=E(T^\mathbb{C},\lambda)
\end{align}
so that in particular the complex dimension of $E(T^\mathbb{C},\lambda)$ and the real dimension of $E(T,\lambda)$ coincide.


\section{The Covering Dimension}\label{appendix-c}
The aim of this final section of the appendix is to recall briefly the definition of the covering dimension of topological spaces.

\begin{defi}
The (covering) dimension $\dim X$ of a topological space $X$ is the minimal value of $n\in\mathbb{N}\cup\{0\}$ such that every finite open cover of $X$ has a finite open refinement in which no point is included in more than $n+1$ elements. $X$ is called infinite dimensional if no such $n$ exists.
\end{defi}  
\noindent
We denote by $\check{H}^{n}(X,A;G)$ the \v{C}ech-cohomology groups of the pair $(X,A)$ with respect to the abelian coefficient group $G$. 

\begin{defi}
The \textit{cohomological dimension} $\dim_GX$ of the topological space $X$ with respect to an abelian coefficient group $G$ is the largest integer $n$ such that there exists a closed subset $A\subset X$ with $\check{H}^n(X,A;G)\neq 0$. If there is no such number, we set $\dim_GX=\infty$. 
\end{defi}
\noindent
We refer for the following lemma to \cite[VIII.4.A]{Hurewicz}.

\begin{lemma}
If $X$ is compact and Hausdorff, then

\begin{align*}
\dim_GX\leq\dim_{\mathbb{Z}}X\leq\dim X
\end{align*}
for any abelian group $G$.
\end{lemma}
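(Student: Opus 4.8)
The plan is to deduce both inequalities from the description of \v{C}ech cohomology of a compact Hausdorff pair $(X,A)$ as the direct limit
\[\check{H}^q(X,A;G)=\varinjlim_{\mathcal{U}}H^q\bigl(N(\mathcal{U}),N(\mathcal{U}|_A);G\bigr),\]
over the directed set of finite open covers $\mathcal{U}$ of $X$, where $N(\mathcal{U})$ is the nerve and $N(\mathcal{U}|_A)\subset N(\mathcal{U})$ is the subcomplex spanned by the simplices $\{U_{i_0},\dots,U_{i_k}\}$ with $U_{i_0}\cap\dots\cap U_{i_k}\cap A\neq\emptyset$. If $\dim X=\infty$ the first inequality is vacuous, and if $\dim_{\mathbb{Z}}X=\infty$ the second is, so in each step I may assume the relevant right-hand side to be a finite integer.

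First I would prove $\dim_{\mathbb{Z}}X\leq\dim X$. Write $n:=\dim X$. By the definition of the covering dimension, every finite open cover of $X$ has a finite open refinement of order at most $n+1$, and the collection of covers of order $\leq n+1$ is directed (any two have a common refinement, which in turn is refined by one of order $\leq n+1$) and cofinal among all finite open covers; hence the direct limit above can be computed over this subcollection. For such a cover the nerve $N(\mathcal{U})$ is a simplicial complex of dimension $\leq n$, so it has no simplices of dimension $q$ when $q>n$; in particular the relative simplicial cochain group $C^q(N(\mathcal{U}),N(\mathcal{U}|_A);\mathbb{Z})$ is zero, and therefore $H^q(N(\mathcal{U}),N(\mathcal{U}|_A);\mathbb{Z})=0$. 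Taking the direct limit we get $\check{H}^q(X,A;\mathbb{Z})=0$ for every closed $A\subset X$ and every $q>n$, which is exactly $\dim_{\mathbb{Z}}X\leq n$.

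Next I would prove $\dim_GX\leq\dim_{\mathbb{Z}}X$. Put $m:=\dim_{\mathbb{Z}}X$, so $\check{H}^q(X,A;\mathbb{Z})=0$ for all closed $A$ and all $q>m$. For a fixed finite open cover $\mathcal{U}$ the pair $(N(\mathcal{U}),N(\mathcal{U}|_A))$ is a pair of finite simplicial complexes, so its relative cochain complex over $\mathbb{Z}$ is a bounded complex of finitely generated free abelian groups and its coefficients can be changed by tensoring with $G$. The algebraic universal coefficient theorem therefore gives a natural short exact sequence
\[0\to H^q(N(\mathcal{U}),N(\mathcal{U}|_A);\mathbb{Z})\otimes G\to H^q(N(\mathcal{U}),N(\mathcal{U}|_A);G)\to\operatorname{Tor}\bigl(H^{q+1}(N(\mathcal{U}),N(\mathcal{U}|_A);\mathbb{Z}),G\bigr)\to0,\]
and since filtered colimits of abelian groups are exact and commute with $-\otimes G$ and with $\operatorname{Tor}(-,G)$, passing to the direct limit over $\mathcal{U}$ yields
\[0\to\check{H}^q(X,A;\mathbb{Z})\otimes G\to\check{H}^q(X,A;G)\to\operatorname{Tor}\bigl(\check{H}^{q+1}(X,A;\mathbb{Z}),G\bigr)\to0.\]
For $q>m$ the two outer terms vanish, hence $\check{H}^q(X,A;G)=0$ for all closed $A$, i.e. $\dim_GX\leq m$. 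Alternatively one may avoid the universal coefficient theorem: choose an exact sequence $0\to R\to F\to G\to0$ with $F$ free abelian, note that $R$ is then free abelian as a subgroup of $F$, observe that \v{C}ech cohomology with free coefficients is integral \v{C}ech cohomology tensored with the coefficient group, and run the resulting Bockstein long exact sequence in degrees $>m$.

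I expect the only delicate point to be the bookkeeping surrounding relative \v{C}ech cohomology of nerves: justifying the displayed colimit formula, verifying that the covers of order $\leq n+1$ form a cofinal directed subsystem, and tracking the subcomplexes $N(\mathcal{U}|_A)$. Beyond that, the argument is routine homological algebra together with the definition of the covering dimension, and it is essentially the classical one; see \cite[VIII.4.A]{Hurewicz}. One could also obtain the first inequality sheaf-theoretically, since a compact Hausdorff space is paracompact and hence has \v{C}ech cohomology equal to sheaf cohomology, covering dimension $\leq n$ forces sheaf cohomology in degrees $>n$ to vanish for every sheaf, and the relative groups are recovered by extension by zero along the open inclusion $X\setminus A\hookrightarrow X$; the nerve argument above is, however, more elementary and self-contained.
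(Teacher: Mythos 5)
Your argument is correct, but note that the paper does not prove this lemma at all: it simply quotes it from Hurewicz--Wallman \cite[VIII.4.A]{Hurewicz}, so there is no ``paper proof'' to compare against line by line. What you supply is essentially the classical argument behind that reference. For $\dim_{\mathbb{Z}}X\leq\dim X$ your cofinality argument is sound: covers of order $\leq n+1$ form a directed, cofinal subfamily (any two have a common refinement, which in turn admits a refinement of order $\leq n+1$ refining both), their nerves have dimension $\leq n$, so the relative cochain groups, hence the relative cohomology groups, vanish in degrees $>n$, and this persists in the colimit. For $\dim_GX\leq\dim_{\mathbb{Z}}X$ your use of the universal coefficient sequence is legitimate because the relative cochain complex of a finite simplicial pair is a bounded complex of finitely generated free abelian groups, change of coefficients is literally $-\otimes G$ there, the sequence is natural in the refinement maps, and filtered colimits are exact and commute with $-\otimes G$ and $\operatorname{Tor}(-,G)$; for $q>m$ both outer terms vanish (the Tor term because $q+1>m$ as well), giving $\check{H}^q(X,A;G)=0$. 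The only genuinely delicate input, as you yourself flag, is the identification of relative \v{C}ech cohomology of a compact Hausdorff pair with the colimit of the relative nerve cohomologies; this is standard (Eilenberg--Steenrod style) but is exactly the point one would either prove or cite. In short: where the paper cites, you give a correct self-contained route, and your Bockstein alternative for the second inequality is an acceptable variant that avoids the universal coefficient theorem at the cost of the observation that subgroups of free abelian groups are free.
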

\noindent
As an immediate consequence, we obtain the following result:

\begin{lemma}\label{cech}
Let $X$ be compact and Hausdorff. If $\check{H}^{k}(X;G)\neq 0$ for some abelian coefficient group $G$, then $\dim X\geq k$.  
\end{lemma}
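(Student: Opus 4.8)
The plan is to deduce Lemma~\ref{cech} directly from the preceding lemma, i.e.\ from the chain of inequalities $\dim_G X\leq\dim_{\mathbb{Z}}X\leq\dim X$ valid for compact Hausdorff $X$. The only work is to observe that the hypothesis $\check{H}^{k}(X;G)\neq 0$ forces $\dim_G X\geq k$, and then chain the inequalities.

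First I would unwind the definition of cohomological dimension $\dim_G X$: it is the largest integer $n$ for which there exists a closed subset $A\subset X$ with $\check{H}^{n}(X,A;G)\neq 0$. The hypothesis gives us $\check{H}^{k}(X;G)\neq 0$, and $\check{H}^{k}(X;G)$ is by definition (absolute \v{C}ech cohomology) the same as $\check{H}^{k}(X,\varnothing;G)$, with $\varnothing$ a perfectly good closed subset of $X$. Hence $n=k$ is one of the integers witnessing non-vanishing of $\check{H}^{k}(X,A;G)$ for a closed $A$, so by definition of $\dim_G X$ as the \emph{largest} such integer we get $\dim_G X\geq k$. (If one prefers to avoid the empty pair, one can instead take $A=\varnothing$ replaced by the disjoint-basepoint trick, or simply note that the long exact sequence of the pair $(X,\{x_0\})$ together with $\check{H}^k(\{x_0\};G)=0$ for $k\geq 1$ also exhibits a closed $A$ with $\check{H}^k(X,A;G)\neq 0$; for $k=0$ the statement $\dim X\geq 0$ is automatic for nonempty $X$.)

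Then I would simply invoke the preceding lemma: since $X$ is compact and Hausdorff, $\dim_G X\leq\dim_{\mathbb{Z}}X\leq\dim X$. Combining with $\dim_G X\geq k$ from the previous step yields $k\leq\dim_G X\leq\dim X$, which is exactly the assertion $\dim X\geq k$.

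I do not expect any genuine obstacle here; the statement is a one-line corollary of the displayed inequality lemma once the definition of $\dim_G$ is read carefully. The only subtlety worth a sentence in the write-up is the case distinction at $k=0$ versus $k\geq 1$ in identifying $\check{H}^{k}(X;G)$ with a relative group $\check{H}^{k}(X,A;G)$ for a closed $A$; for $k\geq 1$ one uses $A=\{x_0\}$ and the reduced/unreduced comparison, and for $k=0$ the conclusion $\dim X\geq 0$ is trivial for any nonempty space. This keeps the proof fully self-contained given the material already recalled in Appendix~\ref{appendix-c}.
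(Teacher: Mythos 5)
Your proposal is correct and coincides with the paper's argument: the lemma is stated there as an immediate consequence of the preceding inequality $\dim_G X\leq\dim_{\mathbb{Z}}X\leq\dim X$, exactly as you argue, with the hypothesis $\check{H}^{k}(X;G)=\check{H}^{k}(X,\varnothing;G)\neq 0$ giving $\dim_G X\geq k$ by the definition of cohomological dimension. Your extra remarks about the pair $(X,\{x_0\})$ and the case $k=0$ are harmless but not needed, since $\varnothing$ is a closed subset and the relative group with empty subspace is the absolute one.
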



\thebibliography{9999999}

\bibitem[AS69]{AtiyahSinger} M.F. Atiyah, I.M. Singer, \textbf{Index Theory for Skew-Adjoint Fredholm Operators}, Inst. Hautes Etudes Sci. Publ. Math. \textbf{37}, 1969, 5--26 

\bibitem[At89]{KTheoryAtiyah} M.F. Atiyah, \textbf{K-Theory}, Addison-Wesley, 1989

\bibitem[BW85]{BoossDesuspension} B. Boo{\ss}, K. Wojciechowski, \textbf{Desuspension of Splitting Elliptic Symbols I}, Ann. Glob. Analysis and Geometry \textbf{3}, 1985, 337-383

\bibitem[BW93]{BoWoBuch} B. Boo{\ss}-Bavnbek, K. P. Wojciechowski, \textbf{Elliptic Boundary Problems for Dirac Operators}, Birkh\"auser, 1993

\bibitem[Br93]{Bredon} G.E. Bredon, \textbf{Topology and Geometry}, Graduate Texts in Mathematics \textbf{139}, Springer, 1993

\bibitem[FP91]{FiPejsachowiczII} P.M. Fitzpatrick, J. Pejsachowicz, \textbf{Nonorientability of the Index Bundle and Several-Parameter Bifurcation}, J. Funct. Anal. \textbf{98}, 1991, 42-58

\bibitem[FPR99]{SFLPejsachowicz} P.M. Fitzpatrick, J. Pejsachowicz, L. Recht, \textbf{Spectral Flow and Bifurcation of Critical Points of Strongly-Indefinite Functionals Part I: General Theory},  J. Funct. Anal. \textbf{162}, 1999, 52--95

\bibitem[FPR00]{SFLPejsachowiczII} P.M. Fitzpatrick, J. Pejsachowicz, L. Recht, \textbf{Spectral Flow and Bifurcation of Critical Points of Strongly-Indefinite Functionals Part II: Bifurcation of Periodic Orbits of Hamiltonian Systems}, J. Differential Equations \textbf{163}, 2000, 18--40

\bibitem[GGK90]{GohbergClasses} I. Gohberg, S. Goldberg, M.A. Kaashoek, \textbf{Classes of linear operators. Vol. I}, Operator Theory: Advances and Applications \textbf{49}, Birkhäuser Verlag, Basel, 1990

\bibitem[HW48]{Hurewicz} W. Hurewicz, H. Wallmann, \textbf{Dimension Theory}, Princeton Mathematical Series \textbf{4}, Princeton University Press, 1948

\bibitem[J65]{Jaenich} K. Jänich, \textbf{Vektorraumbündel und der Raum der Fredholmoperatoren}, Math. Ann. \textbf{161}, 1965, 129--142

\bibitem[MS74]{MiSta} J.W. Milnor, J.D. Stasheff, \textbf{Characteristic Classes}, Princeton University Press, 1974

\bibitem[MPP07]{MussoPejsachowicz} M. Musso, J. Pejsachowicz, A. Portaluri,
\textbf{Morse Index and Bifurcation for p-Geodesics on Semi-Riemannian Manifolds}, ESAIM Control Optim. Calc. Var. \textbf{13}, 2007, 598-621

\bibitem[Pe08a]{Jacobohomoclinics} J. Pejsachowicz, \textbf{Bifurcation of homoclinics}, Proc. Amer. Math. Soc.  \textbf{136}, 2008, 111--118

\bibitem[Pe08b]{Jacobo} J. Pejsachowicz, \textbf{Bifurcation of Homoclinics of Hamiltonian Systems}, Proc. Amer. Math. Soc. \textbf{136}, 2008, 2055--2065

\bibitem[PeW13]{BifJac} J. Pejsachowicz, N. Waterstraat, \textbf{Bifurcation of critical points for continuous families of $C^2$ functionals of Fredholm type}, J. Fixed Point Theory Appl. \textbf{13},  2013, 537--560, arXiv:1307.1043 [math.FA]

\bibitem[PPT03]{PicPorTau} P. Piccione, A. Portaluri, D. V. Tausk, \textbf{Spectral flow, Maslov index and bifurcation of semi-Riemannian geodesics}, Ann. Global Anal. Geom. \textbf{25}, 2004, 121--149

\bibitem[Po11]{Ale} A. Portaluri, \textbf{A K-theoretical invariant and bifurcation for a parameterized 
family of functionals}, J. Math. Anal. Appl. \textbf{377}, 2011, 762--770

\bibitem[PoW14]{AleBifIch} A. Portaluri, N. Waterstraat, \textbf{Bifurcation results for critical points of families of functionals}, Differential Integral Equations  \textbf{27}, 2014, 369--386,	arXiv:1210.0417 [math.DG]

\bibitem[PoW15]{AleSmaleIndef} A. Portaluri, N. Waterstraat, \textbf{A Morse-Smale index theorem for indefinite elliptic systems and bifurcation}, J. Differential Equations \textbf{258}, 2015, 1715--1748, arXiv:1408.1419 [math.AP]

\bibitem[RS95]{Robbin-Salamon} J. Robbin, D. Salamon, \textbf{The Spectral Flow and the Maslov Index}, Bull. London Math. Soc \textbf{27}, 1995, 1--33

\bibitem[St95]{Stuart} C.A. Stuart, \textbf{Bifurcation into spectral gaps}, Bull. Belg. Math. Soc. Simon Stevin  1995,  suppl., 59 pp.

\bibitem[Wa11]{indbundleIch} N. Waterstraat, \textbf{The index bundle for Fredholm morphisms}, Rend. Sem. Mat. Univ. Politec. Torino \textbf{69}, 2011, 299--315

\bibitem[Wa15a]{CalcVar} N. Waterstraat, \textbf{A family index theorem for periodic Hamiltonian systems and bifurcation}, Calc. Var. Partial Differential Equations \textbf{52}, 2015, 727--753, arXiv:1305.5679 [math.DG]

\bibitem[Wa15b]{Homoclinics} N. Waterstraat, \textbf{Spectral flow, crossing forms and homoclinics of Hamiltonian systems}, Proc. Lond. Math. Soc. (3) \textbf{111}, 2015, 275--304, arXiv:1406.3760 [math.DS] 

\bibitem[Wa16a]{systemsSzulkin} N. Waterstraat, \textbf{Spectral flow and bifurcation for a class of strongly indefinite elliptic systems}, arXiv:1512.04109 [math.AP]

\bibitem[Wa16b]{NilsBif} N. Waterstraat, \textbf{A Remark on Bifurcation of Fredholm Maps}, arXiv:1602.02320 [math.FA]

\vspace{1cm}
Alessandro Portaluri\\
Department of Agriculture, Forest and Food Sciences\\
Universit\`a degli studi di Torino\\
Largo Paolo Braccini, 2\\
10095 Grugliasco (TO)\\
Italy\\
E-mail: alessandro.portaluri@unito.it

\vspace{1cm}
Nils Waterstraat\\
School of Mathematics,\\
Statistics \& Actuarial Science\\
University of Kent\\
Canterbury\\
Kent CT2 7NF\\
UNITED KINGDOM\\
E-mail: n.waterstraat@kent.ac.uk

\end{document}